\let\arXiv\arxiv
\def\doi#1{ {\href{http://dx.doi.org/#1}
   {{\mdseries\ttfamily DOI}}}}
\let\MR\mr
\newcommand{\cal}{\mathcal}
\newtheorem{thm}{Theorem}[section]
\newtheorem{prop}[thm]{Proposition}
\newtheorem{coro}[thm]{Corollary}
\newtheorem{rem}{Remark}[section]
\newtheorem{lem}[thm]{Lemma}
\newcommand{\cd}{\, \cdot\, }
\newcommand{\R}{{\mathbb R}}
\newcommand{\ang}{{\not\negmedspace\nabla}}
\newcommand{\la}{\langle}
\newcommand{\ra}{\rangle}
\newcommand{\rs}{{r^*}}
\renewcommand{\S}{{\mathbb S}}
\newcommand{\M}{\cal M}
\newcommand{\tv}{{\tilde{v}}}
\newcommand{\al}{\alpha}    
\newcommand{\de}{\delta}    
\newcommand{\ep}{\epsilon}  
\newcommand{\ga}{\gamma}    
\def\<{\langle}             \def\>{\rangle}
\newcommand{\pt}{\partial_t}\newcommand{\pa}{\partial}
\newcommand{\beeq}{\begin{equation}}\newcommand{\eneq}{\end{equation}}
\newenvironment{prf}{\noindent {\bf Proof.} }{\endprf\par}
\def \endprf{\hfill  {\vrule height6pt width6pt depth0pt}\medskip}
\def\CO{\mathcal {O}}
\numberwithin{equation}{section}
\newcommand{\weight}{{\Bigl(1-\frac{2M}{r}\Bigr)}}
\begin{document}

\title{The Strauss conjecture on Kerr black hole backgrounds}

\author[Lindblad]{Hans Lindblad}
\address{Department of Mathematics, Johns Hopkins University, Baltimore,
MD 21218, USA} \email{lindblad@math.jhu.edu}
\urladdr{http://www.math.jhu.edu/~lindblad}

\author[Metcalfe]{Jason Metcalfe}
\address{Department of Mathematics, University of North Carolina,
  Chapel Hill, NC  27599-3250, USA}
\email{metcalfe@email.unc.edu}
\urladdr{http://www.unc.edu/~metcalfe}

\author[Sogge]{Christopher D. Sogge}
\address{Department of Mathematics, Johns Hopkins University, Baltimore,
MD 21218, USA} \email{sogge@jhu.edu}
\urladdr{http://www.mathematics.jhu.edu/sogge}

\author[Tohaneanu]{Mihai Tohaneanu}
\address{Department of Mathematics, Johns Hopkins University, Baltimore,
MD 21218, USA} \email{mtohanea@math.jhu.edu}
\urladdr{http://www.math.jhu.edu/~mtohanea}

\author[Wang]{Chengbo Wang}
\address{Department of Mathematics, Zhejiang University, Hangzhou 310027, China}
\email{wangcbo@gmail.com}
\urladdr{http://www.math.zju.edu.cn/wang}

\thanks{The first three authors were supported in part by the NSF}

\thanks{
The fifth author was supported by Zhejiang Provincial Natural Science Foundation of
China LR12A01002, the Fundamental Research Funds for the Central Universities, NSFC 11271322 and J1210038.}

\subjclass[2010]{35L05, 35L70, 35B40}

\begin{abstract}We examine solutions to semilinear wave equations on
  black hole backgrounds and give a proof of an analog of the Strauss conjecture
on the Schwarzschild and Kerr, with small angular momentum, black hole
backgrounds.  The key estimates are a class of weighted
Strichartz estimates, which are used near infinity where the metrics
can be viewed as small perturbations of the Minkowski metric, and a
localized energy estimate
on the black hole background, which handles the behavior in the
remaining compact set.
\end{abstract}

\maketitle


\section{Introduction}

In this article, we study an analog of the Strauss conjecture on the
Schwarzschild and Kerr, with small angular momentum, black hole
backgrounds.  In particular, we establish the global existence of
solutions to a class of semilinear wave equations with power-type
nonlinearities with power greater than a certain critical power.  This
critical power, $1+\sqrt{2}$, is the same as that on
$(1+3)$-dimensional Minkowski space.

More specifically, we will consider the evolution of the nonlinear
waves on Kerr black hole backgrounds,
\begin{equation}\label{nlw}
\Box_K u = F_p(u), \qquad u |_{\tv=0} = f, \qquad \tilde T u |_{\tv=0} = g\ .
\end{equation}
Here $\Box_K$ denotes the d'Alembertian in the Kerr metric, and
$\tilde T$ is a smooth, everywhere timelike vector field that equals
$\pa_t$ away from the black hole.  Similarly, the coordinate $\tv$
is chosen so that the slice $\tv=0$ is space-like and so that $\tv=t$
away from the black hole.  A more
detailed description of the Kerr geometry is provided in the next
section.
We shall assume that the nonlinear term behaves like $|u|^p$ when $u$
is small:
\beeq\label{key}
 \sum_{0\leq j\leq 2} |u|^j |\partial_u^j F_p(u)| \lesssim |u|^p\ \textrm{ for } |u|\ll 1\ .
\eneq
Typical examples include $F_p(u)=\pm |u|^p$ and $\pm |u|^{p-1}u$.

The Strauss conjecture concerned the Minkowski case and determining
the values $p$ for which global existence could be guaranteed if the
initial data are sufficiently small.  The first work \cite{John79}
showed that small data global existence was available in
$(1+3)$-dimensional Minkowski space-time for powers $p>1+\sqrt{2}$ but
blow-up could occur for arbitrarily small data when $p<1+\sqrt{2}$.
Shortly afterward, \cite{Strauss81} included the conjecture that the critical power $p_c$ on Minkowski space-time
$\R^{n+1}$ is the positive root of the quadratic equation $$(n - 1)p^2
- (n + 1)p - 2 = 0\ .$$
The existence portion of the conjecture was verified in \cite{G}
($n=2$), \cite{Zh95} ($n=4$), \cite{LdSo96} ($n\le 8$), and
\cite{GLS97}, \cite{Ta01-2} (generic $n$).  The necessity of $p>p_c$
for small data global existence is from \cite{John79}, \cite{Glassey},
\cite{Sideris}, \cite{Schaeffer}, \cite{YorZh06}, and \cite{Zh07}.

Some recent works have sought to extend these results to scenarios
that include nontrivial background geometry.  These include
\cite{DMSZ} ($n=4$), \cite{HMSSZ} ($n=3,4$), and \cite{SmSoWa12}
($n=2$), which examine global existence for similar equations exterior
to nontrapping obstacles.  See also \cite{Yu11} ($n=3,4$) for related
results with certain trapping obstacles.  In the case of nontrapping
aymptotically Euclidean manifolds the same results were obtained in
\cite{SoWa10} (radial metrics, $n=3$) and \cite{WaYu11} (general
metrics $n=3,4$).

We seek to show the same on Kerr black hole backgrounds with small
angular momentum.  In particular, we have
\begin{thm}\label{metaTheorem}
For Kerr space-times with sufficiently small angular momentum and for initial data which are smooth, compactly supported, and
sufficiently small, there exists a global solution $u$ to \eqref{nlw}
provided that $p>1+\sqrt{2}$.
\end{thm}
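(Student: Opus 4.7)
The plan is to run a Picard iteration $u_0\equiv 0$, $\Box_K u_{k+1}=F_p(u_k)$ with the prescribed Cauchy data, and close the scheme in a mixed norm adapted to the two regimes of the Kerr geometry. Far from the black hole, say $r\geq R_0$ with $R_0$ large, in the coordinates $(\tv,r,\omega)$ the metric is a short-range perturbation of Minkowski, so weighted KSS/Strauss--Strichartz estimates of the type used in \cite{GLS97,Ta01-2,HMSSZ} should be available. In a neighborhood of the black hole one must instead rely on a localized energy/Morawetz estimate; such estimates have been established for $\Box_K$ on Kerr backgrounds with sufficiently small angular momentum, and this is precisely where the smallness hypothesis on the angular momentum enters the argument.

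Concretely, I would introduce a smooth radial cutoff $\chi(r)$ supported near the black hole and define $\|u\|_X$ as the sum of a localized energy norm applied to $\chi u$ and a weighted Strauss--Strichartz-type norm applied to $(1-\chi)u$, with weight dictated by the conformal scaling exponent $s_c=\tfrac{3}{2}-\tfrac{2}{p-1}$. Applying the far-field estimate to $(1-\chi)u$ and the local energy estimate to $\chi u$ produces two inequalities that couple only through commutator terms supported in the annulus where $\nabla\chi\ne 0$; each commutator is absorbed by the other piece of $X$, yielding a clean linear bound of the form
\[
\|u\|_X\lesssim \|(f,g)\|_{\dot H^{s_c}\times\dot H^{s_c-1}}+\|\Box_K u\|_Y,
\]
where $Y$ is the corresponding dual norm on the source term.

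The Strauss exponent enters through the nonlinear estimate $\|F_p(u)\|_Y\lesssim \|u\|_X^p$. In the far region this is the classical weighted H\"older argument whose admissible weights force exactly $p>1+\sqrt{2}$; in the bounded region near the black hole, Sobolev embedding combined with the local energy norm is enough. Once both estimates are in place, a standard contraction/continuation argument gives uniform boundedness and convergence of $\{u_k\}$ in $X$ for sufficiently small data, producing a global solution. The main obstacle I expect is the gluing step: the Kerr local energy estimate is proved via delicate vector-field multipliers that are tuned to overcome the trapping at the photon sphere, and one must check that commuting them with the radial cutoff does not spoil the positivity properties. Symmetrically, the weighted Strichartz estimates must be transferred from flat Minkowski to the actual Kerr metric at infinity by a short-range perturbation argument that accommodates the long-range angular momentum contributions at order $1/r$, so that the existing Minkowski technology can be invoked in a black-box fashion on the exterior piece.
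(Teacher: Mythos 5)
Your high-level plan---Picard iteration closed in a mixed norm $X$, weighted Strichartz at infinity, localized energy near the trapping region, cutoff gluing, with the Strauss threshold $p>1+\sqrt{2}$ entering through the weighted H\"older argument at infinity and smallness of the angular momentum entering through the availability of the Kerr localized energy estimate---is the paper's strategy, and your identification of $s_c=\tfrac32-\tfrac{2}{p-1}$ is correct. The one structural difference concerns the gluing, and it is worth flagging because it resolves the very obstacle you anticipate. The paper does \emph{not} apply the localized energy estimate to $\chi u$: it applies the Kerr localized energy estimate of \cite{TT} to the \emph{full} solution $u$ on all of $\M$, keeping the global LE-type norm $\|\la r\ra^{-3/2-\delta}\partial^\gamma u\|_{L^2L^2L^2}$ (and the energy) in the $X$-norm, and only cuts off for the weighted Strichartz piece $\chi Z^\gamma u$, with $\chi$ supported \emph{far} from the black hole. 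Because the LE control is global, the compactly supported commutators $[\Box_K,\chi]u$ and $\chi[\Box_K,Z^\gamma]u$ are absorbed directly by the LE norm; and because the LE estimate is invoked as a black box with no cutoff inserted, the concern you raise about re-deriving the Tataru--Tohaneanu multiplier estimate with a radial cutoff and checking that the delicate positivity near the photon sphere survives simply never arises. By contrast, in your symmetric split (LE on $\chi u$ near the horizon, Strichartz on $(1-\chi)u$ far out), neither piece directly gives $L^2_{\tv}L^2_x$ control of $\nabla u$ on the transition annulus, which is precisely what the commutator terms need, so you would face a circularity. If you pursue this, switch to the paper's asymmetric decomposition: keep the localized energy estimate global and cut off only the Strichartz part.
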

A more precisely stated theorem will be provided in Section
\ref{sectionExistence} after more notation is introduced.  In fact,
our theorem holds more generally than we state.  The proof does not
rely on the precise geometry of the Kerr space-time and rather only
depends on having a metric which is asymptotically Euclidean and for
which there is a sufficiently nice localized energy estimate.  The
latter will be described further in the next section.

On the Schwarzschild space-times, such nonlinear wave equations have
been previously studied for large powers.  In particular, see
\cite{BN93} and \cite{Ni95} for related Klein-Gordon equations,
\cite{DaRo05} ($p>4$ with radial data), \cite{BlSt06} ($p>3$), and
\cite{MMTT} ($p=5$), though well-known arguments (see, e.g., \cite{So08}) allow one to use
Strichartz estimates, such as those proved in \cite{MMTT}, to prove
small data global existence for other $p>3$.  While no such explicit
results have been previously given on Kerr backgrounds, the key
estimates are known in some cases.  For example, \cite{To12} provides
Strichartz estimates in the case that the angular momentum is small.
In the opposite direction, \cite{CG06} provides blow-up for
$p<1+\sqrt{2}$.  The current result fills in the gap $1+\sqrt{2}<p<3$.

The strategy of proof is to use the weighted Strichartz estimates of
\cite{FaWa} and \cite{HMSSZ} (see also \cite{Hi07} for a radial
version, which appeared previously) near infinity where the Kerr
metric can be viewed as a small perturbation of the Minkowski metric.
See \cite{So08} and \cite{WaYu11p} for more details on the history.
In the compact set that remains, localized energy estimates suffice.
We rely on the localized energy estimates of \cite{TT}, though other
variants of these are available as will be described in the following section.

In the case of the Kerr metric our proof requires that the initial
data have compact support.  However, as we shall see in Section
\ref{SchwSection}, we are able to drop this technical assumption in
the special case of the Schwarzschild metric.
It will be interesting to see whether such results hold for the Kerr metric as well.

For convenience and to ease the exposition, we have taken data on a
$\tv=0$ slice.  Passing from data on $t=0$ to data on $\tv=0$ is a
problem of local well-posedness, which can be solved by contraction in
energy spaces.  We omit these details, though a related argument, which
requires Strichartz estimates, can
be found in \cite{MMTT}.

\subsection{Notation}
The relevant sets of vector fields we shall use are as follows
$$
\{\pa_t, \nabla_x\}=\{\pa\},\quad \Omega =x\wedge \nabla_x\ ,
$$
$$Y=\{\nabla_x, \Omega\},\quad Z=\{\pa, \Omega\}=\{\pt\}\cup Y\ .$$
Let $\<x\>=\sqrt{1+|x|^2}$ and $L^q_\omega$ be the standard Lebesgue space on the sphere $\S^2$.
We will use the following mixed-norm $L^{q_1}_\tv L^{q_2}_r L^{q_3}_\omega$,
$$\|f\|_{L^{q_1}_\tv L^{q_2}_r L^{q_3}_\omega(\M)}=\left\|\left(\int_{r_e}^\infty \|f(\tv, r\omega)\|_{L^{q_3}_\omega}^{q_2} r^2 dr\right)^{1/q_2}\right\|_{L^{q_1}(\{\tv\ge 0\})},$$ with trivial modification for the case $q_2=\infty$. Occasionally, we will omit the subscripts. We will also use
$A \lesssim B$ to denote the inequality $A\le C B$ with some positive constant
$C$, which may change from line to line.  We also use the following
convention (for invertible functions $f$ and function spaces $H$)
$$g\in f H\Leftrightarrow f^{-1}g\in H\ .$$

Let us also recall some notations from \cite[Section 5.6]{JJ}.
Let $A$ be a Banach space, $s\in\R$ and $q>0$, we use $l^s_q(A)$ to denote the space of all sequences $(a_j)_{j=0}^\infty$, $a_j\in A$ such that
$$\|(a_j)\|_{l^s_q(A)}=\|2^{js}\|a_j\|_A\|_{l^q_{j\ge 0}}<\infty\ .$$
For a partition of unity subordinate to the dyadic (spatial) annuli,
$1=\sum_{j\ge 0}\phi^2_j(x)$, we shall abuse notation and write
\[
 \|u\|_{l^s_q(A)} = \|(\phi_j(x)u(t, x))\|_{l^s_q(A)}.
\]


\section{The Kerr metric and localized energy estimates}

Let us first recall the Kerr metric.  In Boyer-Lindquist coordinates, it is given by
\[
ds^2 = g_{tt}\,dt^2 + g_{t\phi}\,dt\,d\phi + g_{rr}\,dr^2 +
g_{\phi\phi}\,d\phi^2 + g_{\theta\theta}\,d\theta^2
\]
where $t \in \R$, $r > 0$, $(\phi,\theta)$ are the spherical
coordinates on $\S^2$ and
\[
g_{tt}=-\frac{\Delta-a^2\sin^2\theta}{\rho^2}, \qquad
g_{t\phi}=-2a\frac{2Mr\sin^2\theta}{\rho^2}, \qquad
g_{rr}=\frac{\rho^2}{\Delta},
\]
\[ g_{\phi\phi}=\frac{(r^2+a^2)^2-a^2\Delta
  \sin^2\theta}{\rho^2}\sin^2\theta, \qquad g_{\theta\theta}={\rho^2}
\]
with
\[
\Delta=r^2-2Mr+a^2, \qquad \rho^2=r^2+a^2\cos^2\theta.
\]
Here $M$ represents the mass of the black hole and $aM$ its angular
momentum.  The Schwarzschild space-time is the static solution
corresponding to $a=0$.  And the Minkowski space-time is the trivial
solution to Einstein's equations for which we have $a=0$ and $M=0$.

For convenience, we record that
\[d\text{Vol} = \rho^2 \sin\theta\,dr\,d\theta\,d\phi\,dt = \rho^2
\,dr\,d\omega\,dt\]
where $\omega$ is the standard measure on $\S^2$.  Moreover,
the inverse of the metric is given by:
\[ g^{tt}=-\frac{(r^2+a^2)^2-a^2\Delta\sin^2\theta}{\rho^2\Delta},
\qquad g^{t\phi}=-a\frac{2Mr}{\rho^2\Delta}, \qquad
g^{rr}=\frac{\Delta}{\rho^2},
\]
\[
g^{\phi\phi}=\frac{\Delta-a^2\sin^2\theta}{\rho^2\Delta\sin^2\theta} ,
\qquad g^{\theta\theta}=\frac{1}{\rho^2}.
\]

One can
view $M$ as a scaling parameter, and $a$ scales in the same way as
$M$. Thus $M/a$ is a dimensionless parameter.  We shall subsequently
assume that $a$ is small, $a/M \ll 1$, so that the Kerr metric is a
small perturbation of the Schwarzschild metric.  Provided $M\neq 0$, one could set $M
= 1$ by scaling, but we prefer to keep $M$ in our formulas.  We let
$g_S$, $g_K$ denote the Schwarzschild, respectively
Kerr, metric, and $\Box_S$, $\Box_K$ denote the
associated d'Alembertians, where the (scalar) d'Alembertian is given by $\Box =
\nabla^\gamma \partial_\gamma$ with $\nabla$ denoting the metric
connection.

The Kerr metric has a singularity at $r = 0$
on the equator $\theta = \pi/2$.  The apparent singularities at the
roots of $\Delta$, namely at the horizons
$r=r_{\pm}:=M\pm\sqrt{M^2-a^2}$, are merely coordinate singularities.  For a
further discussion of the nature of $r_\pm$, which is not relevant for our
results, we refer the reader to, e.g., \cite{Ch},\cite{HE}.

To remove the coordinate singularities at $r=r_{\pm}$, we may
introduce Eddington-Finkelstein coordinates.  See, e.g., \cite{HE}.  To do so, we let $r^*$,
$v_+$, and $\phi_+$ solve
\[
dr^*=(r^2+a^2)\Delta^{-1}dr, \qquad dv_{+}=dt+dr^*, \qquad
d\phi_{+}=d\phi+a\Delta^{-1}dr.
\]
The metric then takes the form
\[
\begin{split}
  ds^2= &\
  -\Bigl(1-\frac{2Mr}{\rho^2}\Bigr)\,dv_{+}^2+2\,dr\,dv_{+}-4a\rho^{-2}Mr\sin^2\theta
  \,dv_{+}\,d\phi_{+} -2a\sin^2\theta \,dr\, d\phi_{+}\\
  & \ +\rho^2 d\theta^2  +\rho^{-2}\Bigl[(r^2+a^2)^2-\Delta a^2\sin^2\theta\Bigr]
\sin^2\theta \,d\phi_{+}^2,
\end{split}
\]
which is nondegenerate up to the metric singularity at $\rho=0$.

For our purposes, the Boyer-Lindquist coordinates are
convenient at spatial infinity but not near the event horizon, while
the Eddington-Finkelstein coordinates are convenient near the event
horizon but not at spatial infinity. To combine the two
we replace the $(t,\phi)$ coordinates with $(\tv,\phi_+)$,
as in \cite{MMTT} and \cite{TT}, by defining
\[
\tv = v_{+} - \mu(r)
\]
where $\mu$ is a smooth function of $r$. In these $(\tv,r,\phi_{+},
\theta)$ coordinates the metric has the form
\begin{multline*}
  ds^2=  \Bigl(1-\frac{2Mr}{\rho^2}\Bigr)\, d\tv^2
  +2\left(1-\Bigl(1-\frac{2Mr}{\rho^2}\Bigr)\mu'(r)\right) \,d\tv\, dr
   -4a\rho^{-2}Mr\sin^2\theta \,d\tv\, d\phi_{+}\\ + \left(2 \mu'(r) -
  \Bigl(1-\frac{2Mr}{\rho^2}\Bigr) (\mu'(r))^2\right) \, dr^2
  -2a\theta \Bigl(1+2\rho^{-2}Mr\mu' (r)\Bigr)\sin^2\theta \,dr\, d\phi_{+} +\rho^2\,
  d\theta^2
\\   +\rho^{-2}\Bigl[(r^2+a^2)^2-\Delta a^2\sin^2\theta\Bigr]\sin^2\theta\,
  d\phi_{+}^2.
\end{multline*}
The function $\mu$ is selected to satisfy:

(i) $\mu (r) \geq \rs$ for $r > 2M$, with equality for $r > {5M}/2$.

(ii) The surfaces $\tv = const$ are space-like, i.e.
\[
\mu'(r) > 0, \qquad 2 - (1-\frac{2Mr}{\rho^2}) \mu'(r) > 0.
\]

For $r_e$ fixed satisfying $r_-<r_e<r_+$, we shall consider the wave
equation
\begin{equation}
\Box_K u = F, \qquad u|_{\Sigma^-} = f, \qquad \tilde T u|_{\Sigma^-} = g\ ,
\label{CP}\end{equation}
in $\M =  \{ \tv \geq 0, \ r \geq r_e \}$ and with initial data on the
space like surface $ \Sigma^- =  \M \cap \{ \tv = 0 \}$.  The choice of $r_e$ is unimportant,
and for convenience we may simply use $r_e=M$ for all Kerr metrics
with $a/M\ll 1$.

We use $\ang$ to denote the angular derivatives
$\ang_i=\pa_i-\frac{x^i}{r} \pa_r$, where $x=r\omega$ is understood.  We set
\[
E[u](\Sigma^-) = \int_{\Sigma^-}
 \left(
|\partial_r u|^2 +   |\partial_\tv u|^2    +
|\ang u|^2 \right) r^2  \,dr\,  d\omega\ \]
to be the initial energy.  More generally, we use
\[
E[u](\tv_0) = \int_{ \M \cap \{\tv = \tv_0\}}
 \left(
|\partial_r u|^2 +   |\partial_\tv u|^2    +
|\ang u|^2 \right) r^2  \,dr \, d\omega
\]
to denote the energy on the space-like slice $\tv = constant$.  In
particular, $E[u](\Sigma^-) = E[u](0)$.


In Minkowski space, the localized energy estimate for the wave
equation states that
\[\|\partial u\|_{l^{-1/2}_\infty(L^2_tL^2_x)\cap L^\infty_tL^2_x} +
\|u\|_{l^{-3/2}_\infty(L^2_tL^2_x)} \lesssim \|\partial
u(0,\cd)\|_{L^2_x} + \|\Box u\|_{l^{1/2}_1(L^2_tL^2_x) +
  L^1_tL^2_x}.\]
Such estimates first appeared in \cite{Mo1,Mo2,Mo3} and subsequently
in, e.g., \cite{Strauss75}, \cite{KPV}, \cite{SmSo}, \cite{KSS1,
  KSS2}, \cite{BPST}, \cite{MetSo06, MetSo07}, \cite{Sterb}, and
\cite{HY}.  Such estimates are known to be fairly
robust and variants were proved in \cite{Al06}, \cite{MetSo06,
  MetSo07}, \cite{MetTa07}, \cite{Burq}, \cite{BoHa}, and \cite{SoWa10} for various
nontrapping metric perturbations.  The most basic proof of the
estimate above involves integrating $\Box u$ against $f(r)\partial_r u
+ \frac{n-1}{2}\frac{f(r)}{r}u$, where $f(r)=r/(r+2^j)$ and
integrating by parts.

The above proofs implicitly rely heavily on the fact that all null
geodesics escape to infinity.  In the case that there are trapped
rays, it is known \cite{Ral} that a loss is necessary in order to have a
variant of the localized energy estimate.

Members of the Kerr family of black holes contain trapped rays.  This is easiest
to describe in the Schwarzschild case where trapping occurs on the
event horizon $r=2M$ and on the so-called photon sphere $r=3M$.
Utilizing the red shift effect as in \cite{DaRo09} renders the trapping at
$r=2M$ inconsequential.  The known localized energy estimates on the
Schwarzschild space-time \cite{BS, BS2}, \cite{BlSt06}, \cite{DaRo,
  DaRo09}, and \cite{MMTT} reflect a loss at the
photon sphere.  These can be proved by choosing a related multiplier
where the $f$ is more complicated and switches sign at the photon
sphere.

The trapping on the Kerr space-times is more delicate and can only
be described in phase-space, though it does occur within an $O(a)$
neighborhood of $r=3M$.  See, e.g., \cite{TT}.  Since the region
containing trapped rays cannot be described only in physical space, it
is provable \cite{Al09} that no first order differential multipler, as used
above, can yield such a localized energy estimate.  Despite this,
there have been three related but distinct approaches that have
yielded localized energy estimates on Kerr backgrounds with small
angular momentum.  See \cite{AB09}, \cite{DaRoNotes}, and \cite{TT}.
See, also, \cite{DaRoNew, DaRoNew2} for the subextremal case $|a|< M$.

The approach that we shall follow is that of \cite{TT}.
We define our localized energy norm as
\begin{equation}
 \|u\|_{LE}  =  \|(1-\tilde\chi) \nabla u\|_{l^{-1/2}_\infty(L^2_{\tv,r,\omega})}
     + \|u\|_{l^{-3/2}_\infty(L^2_{\tv,r,\omega})}
 \label{leK}\end{equation}
where $\tilde\chi=\tilde\chi(r)$ is a smooth radial cut off function supported in
$[2.5M, 3.5M]$ and is identity in a neighborhood containing all of the
trapped rays.  It is this degeneracy of the norm that represents the
loss due to the trapping.

We then have the following
\begin{lem}\label{thm-tt}  Suppose $a\ll M$.
  Let $u$ solves the inhomogeneous wave equation $\Box_K u=F_1+F_2$
in the region $\M$ where $F_2$ is supported in $\{r\ge 3.5M\}$.  Then we have
\begin{equation}\label{main.estimate.inhom}
\sup_{\tilde v \geq 0}
    E[u](\tilde v)+  \|u\|_{LE}^2 \lesssim E[u](\Sigma^-) +
    \|F_1\|^2_{L^1_\tv L^2_{r,\omega}} +
\|F_2\|_{l^{1/2}_1(L^2_{\tv,r,\omega})}^2.
\end{equation}
\end{lem}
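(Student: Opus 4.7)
The plan is to combine the multiplier identity of Tataru--Tohaneanu \cite{TT} with a dyadic duality argument on the source side. The starting point is a divergence identity of the form $\pa_\alpha P^\alpha[u] = Q[u] + N[u]\,\Box_K u$, where $P^\alpha[u]$ is built from the energy multiplier $\pa_\tv u$ and a Morawetz-type vector field $X = f(r)\pa_r + h(r)$ (augmented, near the photon sphere, by a pseudodifferential correction to handle the phase-space trapping), and where $N[u] = \pa_\tv u + Xu + (\text{zeroth order in } u)$. The crucial positivity property of $Q[u]$ on the Kerr background, proved in \cite{TT} for $a\ll M$, is that $Q[u]$ dominates the pointwise integrand of $\|u\|_{LE}^2$ (including the $u/r^{3/2}$ term and, off the support of $\tilde\chi$, the full $|\nabla u|^2$), while the boundary flux across $\tv = \tv_0$ controls $E[u](\tv_0)$. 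The red-shift effect at the event horizon $r=r_+$ is used to control the boundary piece on $\{r = r_e\}$, which is purely outgoing.

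First I would integrate this identity over $\M\cap\{0\le\tv\le T\}$ and apply the TT positivity to obtain, modulo absorbing a small multiple of the left-hand side,
\[
\sup_{0\le\tv\le T} E[u](\tv) + \|u\|_{LE}^2 \lesssim E[u](\Si^-) + \Bigl|\int\!\!\int_{\M} (F_1+F_2)\, N[u]\, dV\Bigr|.
\]
For the $F_1$ contribution, Cauchy--Schwarz in $L^2_{r,\om}$ followed by $L^\infty_\tv$--$L^1_\tv$ duality gives
\[
\Bigl|\int\!\!\int F_1\, N[u]\, dV\Bigr| \lesssim \|F_1\|_{L^1_\tv L^2_{r,\om}} \sup_\tv E[u](\tv)^{1/2},
\]
since $N[u]$ is pointwise controlled by $|\pa u|+|u|/r$, which is bounded in $L^\infty_\tv L^2_{r,\om}$ by the energy. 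A weighted Young inequality absorbs this into the left-hand side.

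The $F_2$ piece is where the $l^{1/2}_1$ norm is exploited. Because $F_2$ is supported in $\{r\ge 3.5M\}$, which lies outside $\supp\tilde\chi$, the multiplier $N[u]$ is non-degenerate there and $\mathbf{1}_{r\ge 3.5M}\, N[u]$ is pointwise dominated by $(1-\tilde\chi)|\nabla u| + |u|/r$. Introducing a dyadic partition of unity $1 = \sum_{j\ge 0}\phi_j^2(x)$ subordinate to the annuli $\{r\sim 2^j\}$ and applying Cauchy--Schwarz annulus-by-annulus,
\[
\Bigl|\int\!\!\int F_2\, N[u]\, dV\Bigr|
\le \sum_{j} \|\phi_j F_2\|_{L^2_{\tv,r,\om}}\, \|\phi_j N[u]\|_{L^2_{\tv,r,\om}}
\le \Bigl(\sum_j 2^{j/2}\|\phi_j F_2\|_{L^2}\Bigr)\, \sup_j 2^{-j/2}\|\phi_j N[u]\|_{L^2},
\]
which is precisely $\|F_2\|_{l^{1/2}_1(L^2)}\cdot \|u\|_{LE}$, again absorbable on the left. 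Combining the two source bounds and letting $T\to\infty$ yields \eqref{main.estimate.inhom}.

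The main obstacle is the trapping at the photon sphere: no first-order differential multiplier can produce a non-degenerate $Q[u]$, so one must import from \cite{TT} the construction of $X$ and its pseudodifferential lift, together with the verification that the Kerr perturbation terms (of size $O(a/M)$) can be absorbed by the Schwarzschild positivity for sufficiently small $a$. Once that input is in hand, the upgrade from a single $L^1_\tv L^2$ source norm to the split $L^1_\tv L^2 + l^{1/2}_1 L^2$ norm is a routine dyadic duality, made possible precisely because the cutoff $\tilde\chi$ in the $LE$ norm vanishes on the support of $F_2$.
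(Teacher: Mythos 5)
Your outline is correct, but it takes a noticeably longer route than the paper. The paper's proof is a two-line norm-comparison argument: it cites \cite[Theorem 4.1]{TT} directly, which already gives $\sup E[u] + \|u\|_{LE^1_K}^2 \lesssim E[u](\Sigma^-) + \|F\|_{LE^*_K}^2$, and then observes (a) $\|u\|_{LE} \lesssim \|u\|_{LE^1_K}$, (b) under the support hypothesis $\supp F_2 \subset \{r\ge 3.5M\}$ one has $\|F_2\|_{LE^*_K} \lesssim \|F_2\|_{l^{1/2}_1(L^2)}$, and (c) the $L^1_{\tv}L^2$ bound on $F_1$ comes from bootstrapping the energy term in \cite[(4.14)]{TT} by Cauchy--Schwarz and absorption. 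You instead re-open the black box: you go back to the Tataru--Tohaneanu divergence identity, invoke the positivity of the bulk term $Q[u]$, use $L^1$--$L^\infty$ duality in $\tv$ for $F_1$, and run a dyadic Cauchy--Schwarz to replace the $LE^*_K$ dual norm with $\ell^{1/2}_1(L^2)$ on $F_2$ --- essentially re-proving the relevant part of TT rather than quoting it. Both routes land in the same place; yours has the pedagogical merit of making visible exactly why the support restriction on $F_2$ (being disjoint from $\supp\tilde\chi$, so the multiplier is nondegenerate there) lets you pair against the undegenerate $\ell^{-1/2}_\infty$ piece of $\|u\|_{LE}$, and it avoids having to unwind the precise definition of $LE^*_K$. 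The cost is that you are implicitly re-asserting several nontrivial facts from the TT construction (the positivity with pseudodifferential correction, the red-shift control at $r=r_e$, the structure of $N[u]$ outside the trapped set) which the paper avoids by citing \cite[Theorem 4.1]{TT} wholesale; in a self-contained write-up you would need to make those citations explicit rather than sketch them.
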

This is an easy corollary of \cite[Theorem 4.1]{TT}.  Indeed, the norm
$\|\,\cdot\,\|_{LE^1_K}$ defined therein satisfies
\[\|u\|_{LE} \lesssim \|u\|_{LE^1_K}\]
and, under the support conditions on $F_2$,
\[\|F_2\|_{l^{1/2}_1(L^2_{\tv,r,\omega})} \gtrsim \|F_2\|_{LE^*_K},\]
where the latter is defined in \cite{TT}.  Moreover, the control of
$F_1$ follows by bootstrapping \cite[(4.14)]{TT} using the energy term
rather than the localized energy norm.

We also have the following higher order version of
\eqref{main.estimate.inhom}, which similarly follows from
\cite[Theorem 4.5]{TT}.
\begin{coro}
  Let $n$ be a positive integer, and suppose that $u$ solves the inhomogeneous wave equation $\Box_K u=F_1+F_2$
in the region $\M$ where $F_2$ is supported in $\{r\ge 3.5M\}$.  Then we have
\begin{multline}\label{main.estimate.inhom.n}
\sup_{\tilde v \geq 0}
  \sum_{|\alpha|\le n}  E[\partial^\alpha u](\tilde v)+
  \sum_{|\alpha|\le n} \|\partial^\alpha u\|_{LE}^2 \\\lesssim
\|u(0,\cd)\|^2_{H^{n+1}_x} + \|\tilde T u(0,\cd)\|^2_{H^{n}_x}+
   \sum_{|\alpha|\le n} \|\partial^\alpha F_1\|^2_{L^1_\tv
     L^2_{r,\omega}} + \sum_{|\alpha|\le n}
\|\partial^\alpha F_2\|_{l^{1/2}_1(L^2_{\tv,r,\omega})}^2.
\end{multline}
\end{coro}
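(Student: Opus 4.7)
The plan is to proceed by induction on $n$, with the base case $n=0$ being Lemma 2.1. For the inductive step, fix a multi-index $\alpha$ with $|\alpha|\le n$ and commute $\partial^\alpha$ through the equation to obtain
\begin{equation*}
\Box_K(\partial^\alpha u) = \partial^\alpha F_1 + \partial^\alpha F_2 + [\Box_K,\partial^\alpha]u.
\end{equation*}
Since the Kerr metric is stationary in the $(\tv,r,\phi_+,\theta)$ coordinates, $[\pa_{\tv},\Box_K]=0$, so only the spatial components of $\partial^\alpha$ actually contribute to the commutator. Applying Lemma 2.1 to $\partial^\alpha u$ then reduces the problem to two tasks: bounding $[\Box_K,\partial^\alpha]u$ in the sum of an $L^1_{\tv}L^2_{r,\omega}$-norm and an $l^{1/2}_1(L^2_{\tv,r,\omega})$-norm supported in $\{r\ge 3.5M\}$, and bounding $E[\partial^\alpha u](\Sigma^-)$ by the data norms appearing on the right-hand side of the corollary.

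For the commutator, I would expand
\begin{equation*}
[\Box_K,\partial^\alpha]u = \sum_{\substack{|\beta|+|\gamma|=|\alpha|\\ |\beta|\ge 1}} c_{\alpha,\beta,\gamma}\,\partial^\beta g^{\mu\nu}\cdot \partial_\mu\partial_\nu\partial^\gamma u + (\text{lower-order commutator terms}).
\end{equation*}
The Kerr metric is asymptotically Euclidean with $\partial^\beta g^{\mu\nu}=O(\<r\>^{-|\beta|-1})$ for $|\beta|\ge 1$, and this extra decay lets the $l^{1/2}_1(L^2)$-norm of the portion of the commutator cut off to $\{r\ge 3.5M\}$ be dominated by $\sum_{|\delta|\le|\alpha|}\|\partial^\delta u\|_{LE}$ via Cauchy--Schwarz across dyadic shells. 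The complementary portion, compactly supported in $r_e\le r\le 4M$, is placed in $L^1_{\tv}L^2_{r,\omega}$: the terms with $|\gamma|\le|\alpha|-1$ are bounded by the inductive hypothesis, while the top-order piece (with $|\gamma|=|\alpha|-1$ and $|\beta|=1$) must be absorbed by a bootstrap of the type used to handle $F_1$ in Lemma 2.1 (cf. \cite[(4.14)]{TT}). For the initial energy, the equation $\Box_K u = F_1+F_2$ lets us iteratively trade each $\pa_{\tv}$-derivative on $\Sigma^-$ for at most one $\tilde T$-derivative plus additional spatial derivatives of $u$ and of the $F_i$; the resulting traces of $\pa^{\le n-1}F_i(0,\cd)$ in $L^2_x$ are absorbed into the full spacetime forcing norms via the trace inequality.

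The main obstacle is the top-order piece of $[\Box_K,\partial^\alpha]u$ near the photon sphere $r=3M$, where $\|\cdot\|_{LE}$ degenerates and a naive integration-by-parts argument cannot close. This is precisely the point addressed by \cite[Theorem 4.5]{TT}: the higher-order multiplier constructed there compensates for the derivative falling on the metric with a weight that matches the degeneracy of $\|\cdot\|_{LE}$, so no genuine derivative loss occurs in the nontrapping directions. Once the commutator is controlled in this way, summing the resulting estimate over $|\alpha|\le n$ yields the claimed bound.
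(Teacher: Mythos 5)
Your base case and high-level structure are reasonable, and the observation that $\partial_{\tilde v}$ commutes with $\Box_K$ is correct, but the inductive step as sketched does not close. The problematic term is exactly the one you flag: the top-order piece of $[\Box_K,\partial^\alpha]u$ supported near the photon sphere, schematically $(\partial g)\,\partial^2\partial^{|\alpha|-1}u$, carries $|\alpha|+1$ derivatives and you need to place it in $L^1_{\tv}L^2_{r,\omega}$. The inductive hypothesis gives you only (i) the energy $\sup_{\tv}\|\nabla\partial^{\le n}u\|_{L^2}$, which has $n+1$ derivatives but no time integrability, (ii) the first piece of the $LE$ norm, which controls $(1-\tilde\chi)\nabla\partial^{\le n}u$ in $L^2_{\tv,r,\omega}$ and hence vanishes identically on $r\in[2.5M,3.5M]$ where $\tilde\chi\equiv 1$, and (iii) the second piece, which controls only $\partial^{\le n}u$ — one derivative short. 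None of these yields $L^1_{\tv}L^2$ control of $n+1$ derivatives of $u$ on the trapped region, and the bootstrap in \cite[(4.14)]{TT} that handles $F_1$ shuffles terms between the energy and $LE$ norms but cannot manufacture the missing derivative there.

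You correctly diagnose this obstruction, but then ``resolve'' it by invoking \cite[Theorem 4.5]{TT} as though it were a sub-lemma supplying commutator control inside your induction. That is circular: Theorem 4.5 of \cite{TT} \emph{is} the higher-order localized energy estimate being proved here, and it is obtained in \cite{TT} by constructing higher-order multipliers directly — precisely because the naive commutator induction fails in the manner you describe. The paper's actual proof of this corollary is the same one-line argument used for Lemma~\ref{thm-tt}: quote \cite[Theorem 4.5]{TT} in place of \cite[Theorem 4.1]{TT}, translate the norms ($\|u\|_{LE}\lesssim\|u\|_{LE^1_K}$, $\|F_2\|_{l^{1/2}_1(L^2)}\gtrsim\|F_2\|_{LE^*_K}$ under the support assumption), and bootstrap the $F_1$-contribution via the energy term. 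There is no induction on $n$, and any attempt to do one from Lemma~\ref{thm-tt} alone will hit the wall you identified. The correct move is to abandon the commutator-based induction and cite Theorem 4.5 directly.
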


\section{Weighted Strichartz estimates}
In this section, we collect the required Sobolev-type and
weighted Strichartz estimates.

\subsection{Weighted Sobolev estimates}

In the sequel, we shall require the following weighted Sobolev
estimates.  These are straightforward variants of those that appeared
in, e.g., \cite{Kl1}.
\begin{lem}
For $R\ge 10$, $2\le p<\infty$, and any $b\in \R$, we have
\beeq\label{ap-S}
 \|r^b v\|_{L_r^{\frac{2p(p-1)}{p-2}}L_{\omega}^{\infty}(r\ge R+1)}
 \lesssim \sum_{|\gamma|\leq 2} \|r^{b-\frac{1}{p-1}} Y^\gamma
 v\|_{L_r^{p}L_{\omega}^{2}(r\ge R)},
\eneq
\beeq\label{ap-S4} \|r^b v \|_{L^{4}_x(|x|\ge R+1)}\lesssim
\sum_{|\gamma|\leq 1} \|r^{b+\frac{1}{2}-\frac{2}{p}} Y^\gamma v\|_{L_r^{p}L_{\omega}^{2}(r\ge
  R)}, \quad p\le 4,\eneq
and
\begin{equation}
  \label{ap-Sinfty}
  \|r^b v\|_{L^\infty_x(|x|\ge R+1)} \lesssim \sum_{|\gamma|\le 2}
  \|r^{b-\frac{2}{p}} Y^\gamma v\|_{L^p_r L^2_\omega(r\ge R)}.
\end{equation}
\end{lem}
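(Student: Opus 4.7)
The plan is to combine Sobolev embeddings on $S^2$ (using that $\Omega$ spans the tangent bundle) with weighted one-dimensional Sobolev estimates in the radial variable, a standard Klainerman-style strategy.

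For the radial ingredient, given a nonnegative $h(r)$ on $[R,\infty)$ that decays at infinity (true by approximation for smooth compactly supported $v$), I would write via the fundamental theorem of calculus
\[|r^b h(r)|^p = p\,\Big|\int_r^\infty (s^b h)^{p-1}\bigl(bs^{b-1}h + s^b h'\bigr)\, ds\Big|,\]
and apply H{\"o}lder's inequality with exponents $(p, p/(p-1))$ on the cross term, together with the bound $s^{bp-1}\le R^{-1}s^{bp}$ for $s\ge R\ge 10$. Young's inequality then yields
\[\sup_{r\ge R+1} r^b h(r) \lesssim \Big(\int_R^\infty s^{bp} h^p\, ds\Big)^{1/p} + \Big(\int_R^\infty s^{bp} |h'|^p\, ds\Big)^{1/p}.\]
Analogous (and only slightly more delicate) manipulations, involving H{\"o}lder interpolation between $L^p_r$ and $L^\infty_r$ in the radial direction, produce the $L^4_r$ and $L^{2p(p-1)/(p-2)}_r$ outputs needed for \eqref{ap-S4} and \eqref{ap-S}.

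For the angular ingredient I would use standard Sobolev on $S^2$: $H^2(S^2)\hookrightarrow L^\infty(S^2)$ for \eqref{ap-Sinfty} and \eqref{ap-S}, giving $\|v(r\cdot)\|_{L^\infty_\omega}\lesssim \sum_{|\alpha|\le 2}\|\Omega^\alpha v(r\cdot)\|_{L^2_\omega}$, and $H^1(S^2)\hookrightarrow L^q(S^2)$ for finite $q$ (combined with $L^2_\omega\hookrightarrow L^4_\omega$) for the $L^4_\omega$ piece of \eqref{ap-S4}. These are combined with the 1D estimate above, applied to $h(r)=\|Y^\gamma v(r\cdot)\|_{L^2_\omega}$ and using $|h'(r)|\le \|\pa_r Y^\gamma v(r\cdot)\|_{L^2_\omega}$ by Cauchy-Schwarz on $S^2$, together with $[\pa_r,\Omega]=0$ to identify the mixed derivative with a component of $Y^\gamma v$. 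The main technical obstacle is the weight bookkeeping: the exponents $r^{b-2/p}$, $r^{b-1/(p-1)}$, and $r^{b+1/2-2/p}$ are chosen so that the $r^2$ factor from the paper's convention $L^p_r(r^2\,dr)$ combines to give the correct power of $r$ in the integrand, namely $\|r^{b-2/p}g\|_{L^p_r L^2_\omega} = \bigl(\int r^{bp}\|g\|_{L^2_\omega}^p\, dr\bigr)^{1/p}$, and the restriction $r\ge R\ge 10$ is used throughout to absorb lower-order terms such as $r^{bp-1}h^p$ into the principal $r^{bp}$ integrals.
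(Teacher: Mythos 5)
Your approach is genuinely different from the paper's, but there is a derivative-counting gap that keeps it from closing. The paper localizes to unit annuli $[j,j+1]$ and invokes a single Sobolev embedding on the $3$-manifold $[j-1,j+2]\times\S^2$ (namely $H^2\hookrightarrow L^\infty$ for \eqref{ap-S} and \eqref{ap-Sinfty}, $H^1\hookrightarrow L^4$ for \eqref{ap-S4}), then uses H\"older on the unit interval (free, since $p\geq 2$) and $\ell^p$-summation over $j$. The weight bookkeeping collapses to $r\approx 2^j$. Your plan instead iterates a $2$-dimensional Sobolev embedding on $\S^2$ and a separate $1$-dimensional weighted Sobolev estimate in $r$. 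Iterated integer Sobolev of this type is strictly lossier than the joint embedding: to reach $L^\infty_\omega$ you use $H^2(\S^2)\hookrightarrow L^\infty(\S^2)$, i.e.\ two copies of $\Omega$, and your fundamental-theorem-of-calculus radial estimate then costs one more copy of $\partial_r$ acting on $h(r)=\|\Omega^\alpha v(r\cdot)\|_{L^2_\omega}$. For $|\alpha|=2$ the resulting term $\partial_r\Omega^\alpha v$ is a third-order derivative, but the right sides of \eqref{ap-S} and \eqref{ap-Sinfty} only allow $|\gamma|\leq 2$. The remark that $[\partial_r,\Omega]=0$ ``identifies the mixed derivative with a component of $Y^\gamma v$'' does not resolve this: commuting $\partial_r$ past $\Omega^\alpha$ does not lower the total order, so $\partial_r\Omega^\alpha v$ is still a $Y^\delta v$ with $|\delta|=3$, outside the allowed range.

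The same accounting problem recurs for \eqref{ap-S} (after the H\"older interpolation between $L^p_r$ and $L^\infty_r$ you still carry a fraction of a radial derivative on top of the two angular ones) and for \eqref{ap-S4}, where one angular derivative from $H^1(\S^2)\hookrightarrow L^4(\S^2)$ plus any positive amount of radial regularity to pass from $L^p_r$ to $L^4_r$ when $p<4$ already exceeds the $|\gamma|\leq 1$ budget. A minor additional slip: ``$L^2_\omega\hookrightarrow L^4_\omega$'' is backwards on the compact manifold $\S^2$; you want $H^1(\S^2)\hookrightarrow L^4(\S^2)$ directly. To make the derivative count close you would need either fractional mixed Sobolev spaces (e.g.\ $H^{1/2+\varepsilon}_r\otimes H^{1+\varepsilon}_\omega$ for $L^\infty$) with the attendant bookkeeping of fractional powers of $\Omega$, or, more cleanly, the paper's route: one Sobolev estimate on $\R\times\S^2$ localized to a unit strip, where the two (resp.\ one) derivatives are genuinely shared between radial and angular directions, followed by $\ell^p$-summation in $j$. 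That single joint embedding is exactly what your two-step iteration cannot reproduce at integer regularity.
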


\begin{prf}
By Sobolev's lemma on ${\mathbb R}\times \S^2$, we have for each $j\in {\mathbb N}$ the uniform bounds
$$\|v\|_{L^\infty_rL^\infty_\omega([j,j+1]\times S^2)}
\lesssim \sum_{|\gamma|\le 2}\Bigl(\int_{j-1}^{j+2}\int_{S^2}|Y^\gamma v|^2 \, d\omega dr\Bigr)^{\frac12}.
$$
Hence,
\beeq\label{ap-trace-local} \|v\|_{L_r^{\infty}([j, j+1])L_{\omega}^{\infty}} \lesssim j^{-1} \sum_{|\gamma|\leq 2} \|
Y^\gamma v\|_{L_r^{2}([j-1,j+2])L_{\omega}^{2}} .\eneq
Or more generally,
\[\|r^b v\|_{L^\infty_r([j,j+1])L^\infty_\omega} \lesssim
  \sum_{|\gamma|\le 2} \|r^{b-1} Y^\gamma v\|_{L^2_r([j-1,j+2])L^2_\omega}.\]
The factor $j^{-1}$ on the right comes from the fact that the volume
element for ${\mathbb R}^3$ is $r^2 dr d\omega$.  By H\"older's
inequality, we have that for every $1\le q<\infty$ and $p>2$
\beeq\label{ap-S3} \|r^b v\|_{L_r^{q}([j,j+1])L_{\omega}^{\infty}} \lesssim \sum_{|\gamma|\leq 2} \|r^{b+\frac{2}{q}-\frac{2}{p}} Y^\gamma v\|_{L_r^{p}([j-1,j+2])L_{\omega}^{2}} .\eneq
This is just the inequality \eqref{ap-S} if we set
$q={\frac{2p(p-1)}{p-2}}$ and $l^p$-sum over $j\ge R+1$ using the
Minkowski integral inequality.  Estimate \eqref{ap-Sinfty} follows
from obvious modifications of the same argument.

Inequality \eqref{ap-S4} follows from a similar argument.  The proof of \eqref{ap-trace-local} also yields
$$\|v\|_{L^4_r([j,j+1])L^4_\omega}\lesssim j^{-\frac{1}{2}}\sum_{|\gamma|\le 1}
\|Y^\gamma v\|_{L^2_r([j-1,j+2])L^2_\omega},
$$
which implies \eqref{ap-S4} after an application of H\"older's
inequality, weighting appropriately, and $l^4$-summing over $j$.
\end{prf}

\subsection{Weighted Strichartz estimates}
In this subsection, we prove inhomogeneous weighted Strichartz
estimates near spatial infinity. The exact form of the Kerr metric is
not important here; all that matters is that it is a small
perturbation of Minkowski.
We shall first prove an estimate for small perturbations of the
Minkowski space-time.  In the sequel, we shall then proceed to cutoff the Kerr
solution and focus on the exterior of a ball of sufficiently large radius that we may
view the Kerr metric as a small asymptotic perturbation of the Minkowski metric.

To this end, we set
\[\Box_h \phi = (\partial_t^2 -\Delta -\partial_\alpha h^{\alpha\beta}(t,x)\partial_\beta)
\phi,\]
where the summation convention is employed.  We
shall assume that
\begin{equation}
  \label{hdecay}
 h^{\alpha\beta} = h^{\beta\alpha},\quad |h|\le \frac{\delta }{\la
   x\ra^{\rho}},\quad |\partial h|\le \frac{\delta}{\la x\ra^{1+\rho}},
\end{equation}
for some $\rho>0$ and $\delta\ll 1$,
where in an abuse of notation we set
\[|h|=\sum_{\alpha,\beta=0}^3 |h^{\alpha\beta}(t,x)|,\quad |\partial
h|=\sum_{\alpha,\beta,\gamma=0}^3 |\partial_\gamma h^{\alpha\beta}(t,x)|.\]

Our main estimate near infinity is the following weighted Strichartz
estimate, which is closely akin to those first proved in \cite{HMSSZ}
and \cite{FaWa}.
\begin{thm}\label{thm-wStri}
Let $p\in [2,\infty)$.  Suppose $w$ solves
\[
\Box_h w=G_1+G_2, \qquad w(0,\cdot)=0=\pt w(0,\cdot),
\]
where $h$ satisfies \eqref{hdecay} for $\delta$ sufficiently small.
Additionally, suppose that $w$ vanishes in a neighborhood of the origin.
%
Then for any $\delta_1>0$ and $1/2-1/p<s<1/2$ we have
\beeq\label{weiStri}
 \|\la r\ra^{\frac{3}{2}-\frac{4}p-s} w\|_{L_{t, r}^{p} L^2_\omega}\lesssim
\|  r^{-\frac{1}{2}-s}  G_1\|_{ L^1_{t,r} L^2_\omega} + \|\la r\ra^{\frac{3}{2}-s+\delta_1}G_2\|_{L^2_{t,r,\omega}} .
 \eneq
\end{thm}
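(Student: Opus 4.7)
The plan is to prove the estimate in two stages: first treat the Minkowski case $h\equiv 0$, and then upgrade to $\Box_h$ by absorbing the perturbation $\partial_\alpha h^{\alpha\beta}\partial_\beta$ using the smallness condition \eqref{hdecay}. The hypothesis that $w$ vanishes near the origin is important: it lets us replace $\langle r\rangle$ by $r$ on the support of $w$, and it avoids having to worry about boundary/origin contributions in the conformal machinery of \cite{HMSSZ,FaWa}.

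For the Minkowski estimate, the $L^1_{t,r}L^2_\omega$ bound for $G_1$ is essentially the weighted Strichartz estimate of \cite{HMSSZ} and \cite{FaWa}: it follows by combining their homogeneous weighted Strichartz (proved via the conformal compactification of Minkowski space, decomposition into spherical harmonics, and the weighted Sobolev embeddings \eqref{ap-S}--\eqref{ap-Sinfty}) with Duhamel's formula. To handle the $L^2_{t,r,\omega}$ source $G_2$, I would perform a dyadic spatial decomposition $G_2=\sum_j \chi_j G_2$ with $\chi_j$ localizing to $r\sim 2^j$. On each annular piece, Hölder's inequality in $r$ trades the weighted $L^2$-source norm for the $L^1_{t,r}L^2_\omega$-source norm (already estimable by the first part) at the cost of a factor $\langle 2^j\rangle^{1/2}$, while the $t$-variable is handled by a $TT^*$/Christ--Kiselev argument based on the dual of the homogeneous estimate. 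The excess weight $\langle r\rangle^{\delta_1}$ absorbs the resulting growth in $j$, and the geometric series in $j$ converges.

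To pass from $\Box$ to $\Box_h$, I rewrite the equation as
\[
\Box w = G_1 + G_2 + h^{\alpha\beta}\partial_\alpha\partial_\beta w + (\partial_\alpha h^{\alpha\beta})\partial_\beta w,
\]
place the two perturbation terms into the right-hand side (for instance in the $L^2_{t,r,\omega}$ slot $G_2$), and invoke the Minkowski estimate established above. By \eqref{hdecay} the pointwise bound
\[
|h^{\alpha\beta}\partial_\alpha\partial_\beta w + (\partial_\alpha h^{\alpha\beta})\partial_\beta w| \;\lesssim\; \delta\bigl(\langle r\rangle^{-\rho}|\partial^2 w| + \langle r\rangle^{-1-\rho}|\partial w|\bigr)
\]
holds, so the contribution to the $\langle r\rangle^{3/2-s+\delta_1}$-weighted $L^2$ norm is of the form $\delta$ times weighted $L^2$ norms of $\partial w$ and $\partial^2 w$.

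The main obstacle is that the left-hand side of \eqref{weiStri} controls only $w$ itself, not its derivatives. I would close this gap by running the two-stage argument in parallel for the commuted quantities $\partial w$ and $\partial^2 w$, whose equations inherit the same perturbation structure (with lower-order terms that decay at the same rate), and coupling the resulting system of weighted Strichartz bounds with a standard localized energy/Morawetz inequality for $\Box_h$ near infinity. Smallness of $\delta$ together with the decay rate $\rho>0$ then allows the coupled perturbation contributions to be absorbed on the left. The slack $\delta_1>0$ in the weight gives the room needed to match the weights produced by the decay of $h$ against those produced by the weighted Sobolev embeddings; this weight bookkeeping, and the verification that the range $1/2-1/p<s<1/2$ is preserved throughout, is the most delicate part of the argument.
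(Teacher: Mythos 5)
Your strategy is genuinely different from the paper's, and unfortunately the passage from $\Box$ to $\Box_h$ does not close with the decay hypothesis \eqref{hdecay} actually imposed. The paper never proves a flat Minkowski weighted Strichartz estimate and then perturbs; instead it interpolates between Lemma \ref{lemmaKSSpert} (the localized energy estimate for $\Box_h$ itself, valid for any $\rho>0$ with $\delta$ small) and its divided-by-a-derivative dual form, Lemma \ref{thm-kss2}, using real interpolation in the $l^s_q$ scale and the trace lemma \eqref{trace}. The whole burden of the metric perturbation is carried by the localized energy estimate, where it is absorbed by a direct multiplier argument and an elementary bootstrap. Your interpolation/trace step on the flat estimate is plausible, but you are re-deriving a known weighted Strichartz bound rather than exploiting that the LE estimate is already a perturbation-stable object.

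The concrete gap is in the absorption step. Placing $h^{\alpha\beta}\partial_\alpha\partial_\beta w + (\partial_\alpha h^{\alpha\beta})\partial_\beta w$ in the $G_2$ slot produces a term of size
\[
\delta\,\bigl\|\langle r\rangle^{\frac32-s+\delta_1-\rho}\,\partial^2 w\bigr\|_{L^2_{t,r,\omega}} \;+\; \delta\,\bigl\|\langle r\rangle^{\frac12-s+\delta_1-\rho}\,\partial w\bigr\|_{L^2_{t,r,\omega}}.
\]
Since $s<1/2$ and $\delta_1>0$, the exponent $\frac32-s+\delta_1-\rho$ exceeds $1-\rho$, which is nonnegative already for $\rho\le 1$ and, in the regime $\rho>0$ that the theorem actually hypothesizes, is close to $3/2$. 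The localized energy/Morawetz norm you propose to couple with controls (at best) $\langle r\rangle^{-1/2}$-weighted $L^2$ norms of $\partial w$, $\partial^2 w$; there is a weight gap of roughly $\langle r\rangle^{2-s+\delta_1-\rho}$ that grows in $r$ and cannot be killed by taking $\delta$ small — this is a weight mismatch, not a size mismatch. Running the argument in parallel for $\partial w$ and $\partial^2 w$ does not help, because each commuted equation produces the same excess weight. The interpolation route in the paper avoids this entirely: the perturbation is absorbed once, inside Lemma \ref{lemmaKSSpert}, where only $\rho>0$ is required, and the weighted Strichartz bound is then a consequence, not a starting point. A secondary issue: your $TT^*$/Christ--Kiselev treatment of the $G_2$ slot is unavailable at the endpoint $p=2$, which the theorem includes; the duality-plus-Sobolev step \eqref{9} in the paper handles $p=2$ uniformly.
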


We will prove this by interpolating between the estimates that the
following two lemmas afford to us.  The first is a standard localized
energy estimate, while the second result is a variant of such where
we have divided through by a derivative in the spirit of, e.g.,
\cite[Lemma 2.3]{WaYu11}.

To begin, we note that the following localized energy estimate is an immediate corollary of the methods of
\cite{MetSo06}.  See also \cite{MetSo07} and \cite{MetTa07}.


\begin{lem} \label{lemmaKSSpert}Suppose that $h^{\alpha\beta}$ are smooth
  and satisfy \eqref{hdecay} for $\delta\ll 1$ sufficiently small.
  Let $G\in L^1_tL^2_x + l^{1/2}(L^2_t L^2_x)$, and
let $w$ solve $\Box_h w = G$ on $\R_+\times\R^3$.  Then
  \begin{equation}
    \label{ms_kss}
    \|\partial w\|_{L^\infty_tL^2_x\cap l^{-1/2}_\infty(L^2_tL^2_x)} +
    \|w\|_{l^{-3/2}_\infty(L^2_tL^2_x)} \lesssim \|\partial
    w(0,\cd)\|_2 + \|G\|_{L^1_tL^2_x+l^{1/2}_1(L^2_tL^2_x)}.
  \end{equation}
\end{lem}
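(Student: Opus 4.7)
The plan is to follow the standard Morawetz-multiplier approach indicated in Section~2 and developed for perturbed metrics in \cite{MetSo06, MetSo07, MetTa07}: first prove \eqref{ms_kss} for the flat d'Alembertian, and then treat $h$ as a small perturbation that gets absorbed thanks to its decay and smallness. For the flat case, for each dyadic radius $R = 2^j$, $j\ge 0$, I would contract $\Box w = G$ against the multiplier
\[
X_R w \;=\; f_R(r)\,\pa_r w + \frac{f_R(r)}{r}\,w, \qquad f_R(r) = \frac{r}{r+R},
\]
and integrate by parts over $[0,T]\times\R^3$. The resulting identity yields (i) an energy flux at $t=T$ controlling $\|\pa w(T,\cd)\|_{L^2_x}^2$, (ii) a nonnegative bulk quadratic form which, restricted to the annulus $|x|\sim R$, is bounded below by $R^{-1}$ times the squared local $L^2_{t,x}$-norms of $\pa w$ and $w/r$ that appear in $\|\pa w\|_{l^{-1/2}_\infty(L^2_t L^2_x)}$ and $\|w\|_{l^{-3/2}_\infty(L^2_t L^2_x)}$, with constants uniform in $j$, and (iii) a forcing contribution paired against $X_R w$, estimated by $\|G_1\|_{L^1_t L^2_x}\|\pa w\|_{L^\infty_t L^2_x}$ for the $L^1_t L^2_x$ component of $G$ and by $l^{1/2}_1$--$l^{-1/2}_\infty$ duality for the $l^{1/2}_1(L^2_t L^2_x)$ component. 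Taking the supremum over $j$ gives \eqref{ms_kss} when $h\equiv 0$.

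For general $h$ satisfying \eqref{hdecay}, one carries out the same multiplier computation with $\Box_h$ replacing $\Box$, so the bulk quadratic form from step (ii) acquires error contributions bounded pointwise by
\[
|h|\,|\pa^2 w|\,|X_R w| + |\pa h|\,|\pa w|\,|X_R w|.
\]
After integrating by parts the second-order factor so that derivatives fall on $h$, on $f_R$, or on $w$, all of these errors are dominated by $(|h| + r|\pa h|)(|\pa w|^2 + r^{-2}|w|^2) \les \delta\,\la x\ra^{-\rho}(|\pa w|^2 + r^{-2}|w|^2)$. Since $\rho>0$, on the annulus $|x|\sim 2^j$ this is at most $\delta\,2^{-j\rho}$ times the good density produced in step (ii); summing in $j$ contributes at most $C\delta$ times the LHS of \eqref{ms_kss}, which is absorbed once $\delta$ is chosen small enough.

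The principal technical obstacle is precisely this second-order error $h^{\alpha\beta}\pa_\alpha\pa_\beta w$: it carries too many derivatives to be moved to the right-hand side as a member of $l^{1/2}_1(L^2_t L^2_x)$ without losing regularity. The multiplier identity therefore has to be opened up and the perturbation handled inside it, so that a single integration by parts converts the second-order action on $w$ into a first-order quadratic form in $\pa w$ and $w/r$ with coefficients of size $|h| + |\pa h|$. The smallness $\delta\ll 1$ and the spatial decay $\la x\ra^{-\rho}$ encoded in \eqref{hdecay} are then exactly what is needed to close the bootstrap and obtain \eqref{ms_kss}.
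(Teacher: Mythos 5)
Your overall plan — the dyadic Morawetz multiplier $f_R(r)\pa_r w + \frac{f_R(r)}{r}w$ with $f_R=r/(r+R)$, followed by absorption of the perturbation via the smallness and decay in \eqref{hdecay} — is exactly the argument the paper has in mind: it states the lemma as "an immediate corollary of the methods of \cite{MetSo06}," and Section~2 describes this very multiplier. So your approach matches the paper's.

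However, there is a quantitative slip in the absorption step that makes the argument fail as written whenever $\rho<1$ (which is the relevant case for Kerr, where the metric perturbation decays like $1/r$, i.e.\ $\rho=1$, with no margin). You claim the perturbation errors are pointwise dominated by $(|h|+r|\pa h|)\bigl(|\pa w|^2 + r^{-2}|w|^2\bigr)\lesssim \de\la x\ra^{-\rho}\bigl(|\pa w|^2 + r^{-2}|w|^2\bigr)$. But each error term arises after the second-order factor is integrated by parts, which always places a derivative on one of $h$, $f_R$, or $f_R/r$; since $|f_R'|,\,|f_R/r|,\,|\pa(f_R/r)|\cdot r \lesssim \min(1/R,1/r)\le 1/r$, every such coefficient carries an extra factor of $\la r\ra^{-1}$ beyond the trivial size of $h$. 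The correct pointwise bound is therefore
\[
\Bigl(\tfrac{|h|}{r}+|\pa h|\Bigr)\bigl(|\pa w|^2 + r^{-2}|w|^2\bigr)\ \lesssim\ \de\,\la x\ra^{-1-\rho}\bigl(|\pa w|^2 + r^{-2}|w|^2\bigr).
\]
With your weaker bound $\de\la x\ra^{-\rho}$, the dyadic sum against the localized energy norm gives $\de\sum_j 2^{-j\rho}\,\|\phi_j(|\pa w|+r^{-1}|w|)\|_{L^2_{t,x}}^2 \le \de\,\bigl(\text{LHS}\bigr)^2\sum_j 2^{j(1-\rho)}$, which diverges for $\rho<1$, and the absorption cannot close. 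With the corrected $\de\la x\ra^{-1-\rho}$ the same computation produces $\de\sum_j 2^{-j\rho}$, which converges, and the argument goes through. Your statement that the error on $|x|\sim 2^j$ is "$\de\,2^{-j\rho}$ times the good density produced in step~(ii)" also conflates two different densities: the good bulk density from a fixed multiplier $X_R$ is itself of size $\sim R^{-1}$ on $|x|\sim R$, so the honest comparison is between the error integral and $\sup_k 2^{-k}\|\phi_k(|\pa w|+r^{-1}|w|)\|_{L^2_{t,x}}^2$ as above. Once the missing $\la r\ra^{-1}$ is restored the whole proof is sound and is the intended one.

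Two smaller remarks. First, the bulk form of the radial Morawetz multiplier alone does not control $\pa_t w$ locally in $L^2$; one either adds a small multiple of the energy multiplier $\pa_t w$ or uses the equation to recover $\pa_t^2 w$ — your sketch glosses over this, which is standard but should be flagged. Second, the time-boundary term $-\int \pa_t w\cdot X_R w\,dx\big|_0^T$ also needs the uniform energy bound to be absorbed; this is again standard once the energy multiplier is included.
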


The other endpoint for our real interpolation shall be:
\begin{lem}\label{thm-kss2}
Suppose that $G\in l^{1/2}_1(L^2_t\dot{H}^{-1}_x) +
L^1_t\dot{H}^{-1}_x$ and that
$w$ solves $\Box_h w = G$ with vanishing initial data.  Here $h$ is
assumed to satisfy \eqref{hdecay} for $\delta\ll 1$ sufficiently small.  Then
\begin{equation}
  \label{kss_noD}
  \|w\|_{l^{-1/2}_\infty(L^2_tL^2_x)\cap L^\infty_tL^2_x}\lesssim
  \|G\|_{l^{1/2}_1(L^2_t\dot{H}^{-1}_x)+L^1_t\dot{H}^{-1}_x}.
\end{equation}
\end{lem}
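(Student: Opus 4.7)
My plan is to prove \eqref{kss_noD} by a duality argument against Lemma \ref{lemmaKSSpert}. The key observation is that since $h^{\alpha\beta}$ is symmetric, the operator $\Box_h=\partial_t^2-\Delta-\partial_\alpha h^{\alpha\beta}\partial_\beta$ (in divergence form) is formally self-adjoint. Given a test function $\phi\in C_0^\infty(\R_+\times\R^3)$ supported in $\{t\le T\}$, let $\psi$ be the solution of the backward Cauchy problem $\Box_h\psi=\phi$ with $\psi|_{t=T}=\pt\psi|_{t=T}=0$. Reversing time, this is the usual forward problem, so by Lemma \ref{lemmaKSSpert} the estimate
\begin{equation*}
\|\nabla\psi\|_{L^\infty_tL^2_x\,\cap\, l^{-1/2}_\infty(L^2_tL^2_x)}+\|\psi\|_{l^{-3/2}_\infty(L^2_tL^2_x)}\lesssim \|\phi\|_{l^{1/2}_1(L^2_tL^2_x)+L^1_tL^2_x}
\end{equation*}
holds. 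Since $w$ has zero Cauchy data at $t=0$ and $\psi$ has zero data at $t=T$, integration by parts on the slab gives the Green-type identity
\begin{equation*}
 \int w\,\phi\,dx\,dt=\int w\,\Box_h\psi\,dx\,dt=\int (\Box_h w)\,\psi\,dx\,dt=\int G\,\psi\,dx\,dt.
\end{equation*}

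Next I would decompose $G=G_1+G_2$ with $G_1\in l^{1/2}_1(L^2_t\dot H^{-1}_x)$ and $G_2\in L^1_t\dot H^{-1}_x$. For the first piece, using $\sum_j\phi_j^2\equiv 1$, pair and apply Cauchy-Schwarz dyadically:
\begin{equation*}
\Bigl|\int G_1\psi\Bigr|\le\sum_j\|\phi_j G_1\|_{L^2_t\dot H^{-1}_x}\|\phi_j\psi\|_{L^2_t\dot H^1_x}\lesssim\|G_1\|_{l^{1/2}_1(L^2_t\dot H^{-1}_x)}\|\psi\|_{l^{-1/2}_\infty(L^2_t\dot H^1_x)}.
\end{equation*}
Because $\nabla(\phi_j\psi)=\phi_j\nabla\psi+(\nabla\phi_j)\psi$ with $|\nabla\phi_j|\lesssim 2^{-j}$, one has
\begin{equation*}
\|\psi\|_{l^{-1/2}_\infty(L^2_t\dot H^1_x)}\lesssim\|\nabla\psi\|_{l^{-1/2}_\infty(L^2_tL^2_x)}+\|\psi\|_{l^{-3/2}_\infty(L^2_tL^2_x)},
\end{equation*}
which is controlled by the right-hand side of the estimate for $\psi$ displayed above. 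For $G_2$, the parallel argument gives $|\int G_2\psi|\lesssim\|G_2\|_{L^1_t\dot H^{-1}_x}\|\nabla\psi\|_{L^\infty_tL^2_x}$, which is also controlled.

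Combining, for every admissible $\phi$ one obtains
\begin{equation*}
\Bigl|\int w\phi\,dx\,dt\Bigr|\lesssim\|G\|_{l^{1/2}_1(L^2_t\dot H^{-1}_x)+L^1_t\dot H^{-1}_x}\|\phi\|_{l^{1/2}_1(L^2_tL^2_x)+L^1_tL^2_x}.
\end{equation*}
Taking the supremum over $\phi$ in the unit ball of $l^{1/2}_1(L^2_tL^2_x)+L^1_tL^2_x$ and using the standard identity $(Y_1+Y_2)^*=Y_1^*\cap Y_2^*$ together with the duality $(l^{1/2}_1(L^2_{t,x}))^*=l^{-1/2}_\infty(L^2_{t,x})$ (coming from the $\sum_j\phi_j^2=1$ pairing) yields exactly \eqref{kss_noD}. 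The main technical point I expect to have to handle carefully is verifying that the dyadic dual pairing with the non-local $\dot H^{-1}_x$ norm behaves as just indicated; since the cutoffs $\phi_j$ only commute with $\nabla$ up to a zero-order term that is reabsorbed by the $l^{-3/2}_\infty(L^2_{t,x})$ piece supplied by Lemma \ref{lemmaKSSpert}, this is manageable and does not require any further estimate.
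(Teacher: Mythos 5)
Your proposal is correct and follows essentially the same route as the paper's proof: a duality argument where one solves the adjoint (time-reversed) problem with data $\phi$, applies Lemma \ref{lemmaKSSpert} backward in time, uses the Green identity (justified by self-adjointness of $\Box_h$ and the vanishing of the boundary terms at $t=0$ and $t=T$), and then pairs $G$ against the adjoint solution using the $l^{1/2}_1(L^2_t\dot H^{-1}_x)+L^1_t\dot H^{-1}_x$ versus $l^{-1/2}_\infty(L^2_t\dot H^1_x)\cap L^\infty_t\dot H^1_x$ duality. The only difference is cosmetic: you spell out the dyadic Cauchy--Schwarz and the commutator $\nabla(\phi_j\psi)=\phi_j\nabla\psi+(\nabla\phi_j)\psi$ (absorbed by the $l^{-3/2}_\infty(L^2_{t,x})$ term) explicitly, whereas the paper records these facts in a single compressed display and a prior remark.
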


\begin{prf}
  Given a function $F\in l^{1/2}_1(L^2_tL^2_x)+L^1_tL^2_x$ over a
  $[0,T]$ time-strip, let $u$ solve $\Box_h u = F$ with vanishing data
  on the $t=T$ slice.  Applying Lemma \ref{lemmaKSSpert} backward in
  time, we obtain
\begin{equation}\label{dualLE} \|u\|_{l^{-1/2}_\infty(L^2_t\dot{H}^1_x) \cap L^\infty_t
  \dot{H}^1_x} \lesssim \|F\|_{l^{1/2}_1(L^2_tL^2_x)+L^1_tL^2_x}.\end{equation}

We then have
\[
\la w,F\ra = \la w, \Box_h u\ra = \la \Box_h w, u\ra \le
\|\Box_h w\|_{l^{1/2}_1(L^2_t\dot{H}^{-1}_x) + L^1_t\dot{H}^{-1}_x}
\|u\|_{l^{-1/2}_\infty(L^2_t\dot{H}^1_x)\cap L^\infty_t\dot{H}^1_x},
\]
where the inner product is that from $L^2_{t,x}([0,T]\times \R^3)$.
Applying \eqref{dualLE}, we get
\[\la w,F\ra \lesssim \|G\|_{l^{1/2}_1(L^2_t\dot{H}^{-1}_x)+
  L^1_t\dot{H}^{-1}_x} \|F\|_{l^{1/2}_1(L^2_tL^2_x)+L^1_tL^2_x},\]
which by duality completes the proof.
\end{prf}

We can now use real interpolation to complete the proof of Theorem \ref{thm-wStri}.  We record the following facts:
\begin{itemize}
\item (\cite[Theorem 5.6.1]{JJ})
  $[l_{q_0}^{s_0}(A),l_{q_1}^{s_1}(A)]_{\theta,q}=l_{q}^{(1-\theta)
  s_0+\theta s_1}(A)$ for any $s_0\neq s_1$, $0<q_0,q_1, q\le \infty$ and $\theta\in (0,1)$.
\item (\cite[Theorem 5.6.2]{JJ})
  $[l_{q_0}^{s_0}(A_0),l_{q_1}^{s_1}(A_1)]_{\theta,q}=l_{q}^{(1-\theta)
    s_0+\theta s_1}([A_0,A_1]_{\theta,q})$ if $0<q_0,q_1,q<\infty$ and
  $\frac 1q=(1-\theta)\frac{1}{q_0}+\theta\frac{1}{q_1}$.
\item (\cite[1.18.4]{TH})
  $[L^{p_0}(A_0),L^{p_1}(A_1)]_{\theta,p}=L^p([A_0,A_1]_{\theta,p})$ where
  $\frac 1p=\frac{1-\theta}{p_0}+\frac{\theta}{p_1}$, $1\le p_0,
  p_1<\infty$.
\item (\cite[Theorem 6.4.5]{JJ}) $[\dot H^{s_0},\dot H^{s_1}]_{\theta,2}=\dot H^{(1-\theta)s_0+\theta s_1}.$
\end{itemize}




We note that the left side of \eqref{ms_kss} controls
\[\|w\|_{l^{-1/2}_\infty(L^2_t\dot{H}^1_x)\cap
  l^{-3/2}_\infty(L^2_tL^2_x)} + \|w\|_{L^\infty_t \dot{H}^1_x}.\]
Thus, interpolation between \eqref{ms_kss} and \eqref{kss_noD} yields
\begin{equation}
  \label{5}
 \|w\|_{L^\infty_t \dot{H}^{s'}_x} + \|w\|_{l^{-1/2-s'}_2(L^2_tL^2_x)}
 \lesssim \|G_1\|_{L^1_t \dot{H}^{s'-1}_x} + \|G_2\|_{l^{1/2}_1(L^2_t\dot{H}^{s'-1}_x)}
\end{equation}
for any $0<s'<1$.  (Technically, for $i=1,2$, we are letting $w_i$ solve $\Box_h
w_i=G_i$ with vanishing initial data and doing separate interpolations
for each $i$.  We record this, however, as a single step.)  Combining
the above with the trace theorem on the sphere,
\begin{equation}\label{trace}
\|r^{\frac{3}{2}-s''} f\|_{L^\infty_r L^2_\omega}\lesssim \|f\|_{\dot H^{s''}},\quad 1/2<s''<3/2,
\end{equation}
we obtain
\beeq\label{6}
 \|r^{\frac{3}{2}-s''} w\|_{L^\infty_{t,r} L^2_\omega}\lesssim
 \|G_1\|_{L^1_t \dot H_x^{s''-1}} + \|G_2\|_{l^{1/2}_1(L^2_t\dot{H}^{s''-1}_x)},\quad 1/2<s''<1.
\eneq
Interpolating between the second term in \eqref{5} and \eqref{6} then
yields
\beeq\label{7}
 \|r^{\frac{3}{2}-\frac{4}{p}-s} w\|_{L^p_{t,r} L^2_\omega}\lesssim
 \|G_1\|_{L^1_t \dot H_x^{s-1}} + \|G_2\|_{l^{1/2}_1(L^2_t\dot{H}^{s-1}_x)},\quad \frac{1}{2}-\frac{1}{p}<s<1.
\eneq
If additionally $1-s>1/2$, then we can apply the dual to \eqref{trace}
to obtain
\beeq\label{8}
 \|r^{\frac{3}{2}-\frac{4}{p}-s} w\|_{L^p_{t,r}
   L^2_\omega}\lesssim \|r^{-\frac{1}{2}-s} G_1\|_{L^1_{t,r}
   L^2_\omega}
+ \|G_2\|_{l^{1/2}_1(L^2_t \dot{H}^{s-1}_x)},\quad  \frac{1}{2}-\frac{1}{p}<s<\frac{1}{2}.
\eneq
By duality, the Sobolev embedding $\dot H^{1-s} \subset L^{\frac{6}{2s+1}}$, and H\"{o}lder's inequality, we have
\beeq\label{9}
\|G_2\|_{l^{1/2}_1(L^2_t \dot{H}^{s-1}_x)}\lesssim
\|\<r\>^{\frac{3}{2}-s+\de}G_2\|_{L^2_{t,r,\omega}} ,\quad s\in (0,1/2),
\eneq
which completes the proof of Theorem \ref{thm-wStri}.






\section{The Strauss conjecture on Kerr black hole background}\label{sectionExistence}

We now prove our main theorem, Theorem \ref{metaTheorem}.  In fact,
with more notation in place, we first state a more precise version of
the theorem.

\begin{thm}\label{Glob-Kerr}
Suppose that the initial data $(f,g)\in H^3\times H^2$ have compact support.
 Then there exists a global solution $u$ in $\M$ for the problem \eqref{nlw} with $p>1+\sqrt{2}$, provided that
\beeq\label{Th-as}\|f\|_{H^3}+\|g\|_{H^2}=\varepsilon\ll 1\eneq is small enough.
Moreover, there is a large constant $R_0$, depending only on $M$ and
$a$, such that we have the following property: For any initial data
supported in a ball with radius $R$ with $R\ge R_0$ and any
$\delta>0$, there exists a constant $C>0$, depending on $F_p$, $R$, $\delta$, $M$ and $a$, so that we have the following estimate for the solution $u$,
\[
\sum_{|\gamma|\leq 2} \Bigl(
  \| r^{-\alpha} \chi Z^{\gamma} u\|_{L^q_\tv L^q_r
    L^2_\omega} + \|\la r\ra^{-3/2-\delta} \pa^\gamma u\|_{L^2_\tv
    L^2_r L^2_\omega}+ \|\pa^\gamma \partial u\|_{L^\infty_\tv
    L^2_r L^2_\omega}\Bigr)\le C \varepsilon.
\]
Here, $\chi(r)$ is a cutoff
function supported when $r>R$ so that $\chi = 1$ when $r>R+1$, and
$\alpha = \frac4q - \frac2{q-1}$ with $q=p$ if $p\in (1+\sqrt{2}, 3)$
and $q\in (1+\sqrt{2},3)$ if $p\ge 3$.
\end{thm}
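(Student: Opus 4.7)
The plan is to solve \eqref{nlw} by Picard iteration. Set $u_{-1}\equiv 0$, and for $k\ge 0$ define $u_k$ as the solution of the linear Cauchy problem $\Box_K u_k=F_p(u_{k-1})$ in $\M$ with the prescribed data on $\Si^-$. Let $X$ denote the norm on the left-hand side of the asserted estimate, i.e.\ the sum over $|\ga|\le 2$ of the weighted Strichartz piece in $r^{-\al}\chi Z^\ga u$, the localized energy piece in $\la r\ra^{-3/2-\de}\pa^\ga u$, and the energy piece $\pa^\ga\pa u$. The goal is to prove a linear estimate $\|u_k\|_X\les \vep+N(u_{k-1})$ together with a nonlinear estimate $N(u)\les \|u\|_X^p$ for a suitable dual norm $N$; standard contraction on successive differences then produces a unique small global solution in the ball $\{\|u\|_X\le C\vep\}$, and the quantitative bound follows.

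For the linear estimate I split $\M$ into a far region $\{r>R\}$ and a bulk region $\{r\le R+1\}$, where $R\ge R_0$ is taken so large that the Kerr metric on $r\ge R_0-1$ satisfies the perturbative hypothesis \eqref{hdecay} in Boyer-Lindquist coordinates, and also large enough to contain the support of the initial data. The localized energy and energy pieces of $X$ are controlled on all of $\M$ by the higher-order corollary of Lemma \ref{thm-tt}: one writes $F_p(u)=F_1+F_2$ with $F_2$ supported in $\{r\ge 3.5M\}$, bounds $F_1$ in $L^1_\tv L^2_{r,\omega}$ using $H^3\subset L^\infty$ on the compact piece, and bounds $F_2$ in $l^{1/2}_1(L^2_{\tv,r,\omega})$ using the decay of $u$ encoded in $X$. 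For the Strichartz piece, set $\tu=\chi u$; since the data is supported in $\{|x|\le R\}$, $\tu$ has vanishing data, matching the hypothesis of Theorem \ref{thm-wStri}. Applying that theorem to $Z^\ga\tu$ on Minkowski space (after commuting $Z^\ga$ through $\Box_K$), the inhomogeneity $\Box_h(Z^\ga\tu)$ consists of $\chi Z^\ga F_p(u)$ plus two kinds of commutator error: $[\Box_K,\chi]u$ is a first-order operator supported in the compact annulus $R<r<R+1$ and is absorbed by the localized energy norm there, while $[Z^\ga,\Box_K]u$ generates only lower-order terms whose coefficients decay like $O(\la r\ra^{-1-\rho})$ in the far region (since $\pa_\tv$ and $\pa_{\phi_+}$ are Killing for $g_K$, only the translations and rotations in $Z$ contribute) and are absorbed with a small constant.

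For the nonlinear estimate, on the bulk region $\{r\le R+1\}$ the embedding $H^3\subset L^\infty$ together with \eqref{key} yields $|F_p(u)|\les \vep^{p-1}|u|$, which is absorbed for $\vep\ll 1$. On the far region the key step is to bound $\|r^{-1/2-s}|u|^p\|_{L^1_{\tv,r}L^2_\omega}$ by H\"older in $(\tv,r)$ and the weighted angular Sobolev estimate \eqref{ap-S} passing from $L^2_\omega$ to $L^\infty_\omega$, at the cost of two $Y$-derivatives that are controlled by the $Z^\ga$ sum in $X$. Taking $q=p$ when $p\in(1+\sqrt 2,3)$, the matching of weights works out exactly: $p\al=1/2+s$ with $\al=4/p-2/(p-1)$ and $s=3/2-2/(p-1)$, and the admissibility range $1/2-1/p<s<1/2$ forced by Theorem \ref{thm-wStri} reduces algebraically to $p^2-2p-1>0$, i.e.\ $p>1+\sqrt 2$. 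This is the precise origin of the Strauss exponent in our setting. For $p\ge 3$ one instead picks an auxiliary $q\in(1+\sqrt 2,3)$, writes $|u|^p=|u|^q\cdot|u|^{p-q}$, and uses the pointwise $L^\infty$ bound on $u$ from the energy piece of $X$ and Sobolev embedding to handle the extra factor.

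The principal technical obstacle will be the interface bookkeeping between the two regions: the vector fields $Z$ are adapted to Minkowski rather than to Kerr, so the commutators $[Z^\ga,\Box_K]$ and $[\Box_K,\chi]$ must be organized so that each error term either decays fast enough in $r$ to be absorbed by Theorem \ref{thm-wStri} or is supported in the compact transition annulus and hence controlled by the corollary of Lemma \ref{thm-tt}. Once the closed estimate $\|u_k\|_X\le C\vep+C\|u_{k-1}\|_X^p$, together with the analogous bound on $u_{k+1}-u_k$ from the second-order version of \eqref{key}, is established, contraction in $X$ yields the global solution with the asserted quantitative bound.
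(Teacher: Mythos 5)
Your proposal follows the same architecture as the paper's proof: Picard iteration, a linear estimate in the $X$-norm obtained by splitting into a far region (treated by cutting off with $\chi$ and applying the weighted Strichartz estimate of Theorem~\ref{thm-wStri} to $\chi Z^\gamma u$, since the Cauchy data for $\chi Z^\gamma u$ vanish by the support hypothesis) and a compact region (treated by the higher-order localized energy estimate~\eqref{main.estimate.inhom.n}), commutators routed to whichever piece controls them, and then a nonlinear estimate $\|F_p(u)\|_N\lesssim\|u\|_X^p$ closed by Sobolev on $\S^2$, \eqref{ap-S}--\eqref{ap-Sinfty}, and H\"older. Your exponent bookkeeping ($p\alpha = 1/2+s$ with $\alpha=\frac4p-\frac2{p-1}$, $s=\frac32-\frac2{p-1}$, admissibility $\frac12-\frac1p<s<\frac12$ reducing to $p^2-2p-1>0$) is exactly the calculation the paper performs.

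There is, however, a genuine gap in your treatment of the nonlinearity on the compact region. You argue that on $\{r\le R+1\}$, Sobolev plus \eqref{key} give $|F_p(u)|\lesssim\varepsilon^{p-1}|u|$ and that this ``is absorbed for $\varepsilon\ll 1$.'' But the quantity that must be controlled is $\|F_p(u)\|_{L^1_\tv L^2_{r,\omega}}$ (the second piece of the $N$-norm, needed to feed the localized energy estimate), and $\varepsilon^{p-1}\|u\|_{L^1_\tv L^2_{r,\omega}(r\le R+1)}$ is of no use because the $X$-norm does not control any $L^1$-in-time norm of $u$; it only furnishes $L^2_\tv$ (from the localized energy piece) and $L^\infty_\tv$ (from the energy piece). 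Taking $p-1$ factors in $L^\infty_{\tv,x}$ and one in $L^1_\tv$ therefore does not close. The paper instead distributes the powers so that two factors land in $L^2_\tv$: it bounds $\sum_{|\gamma|\le2}\|u^{p-1}Z^\gamma u\|_{L^1L^2L^2(r\le R+2)}$ by $\|u\|^{p-2}_{L^\infty L^\infty L^\infty}\,\|u\|_{L^2 L^\infty L^\infty}\sum_{|\gamma|\le2}\|\partial^\gamma u\|_{L^2L^2L^2}$, with the first factor coming from the energy piece via spatial Sobolev embedding, and the latter two both from the localized-energy piece (whose weight is $O(1)$ on the compact set), so that H\"older in $\tv$ produces the required $L^1_\tv$. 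The analogous splitting is also needed for the term with two $Z^\beta u$ factors ($|\beta|\le1$). Without this specific H\"older arrangement your compact-region estimate does not close, so this step needs to be replaced rather than merely tightened.

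A smaller imprecision: you say the far-field commutator $[Z^\gamma,\Box_K]u$ is ``absorbed with a small constant.'' In the paper these terms are not absorbed by smallness of $a$ or $\delta$; rather, the $O(r^{-2})$ decay in the commutator coefficients, combined with the weight $\langle r\rangle^{3/2-s+\delta_1}$ allowed in the $G_2$ slot of Theorem~\ref{thm-wStri}, places them at a weight $\langle r\rangle^{-1/2-s+\delta_1}$ with $\delta_1<s$, which is then bounded by the localized energy norm via \eqref{leDn2} and \eqref{main.estimate.inhom.n}. This is a control by a different piece of the estimate, not an absorption, and phrasing it as such obscures why the argument requires the two estimates to be combined at all.
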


In the proof that follows, we shall only focus on $p\in
(1+\sqrt{2},3)$.  This is, in part, because the cases $p>3$ have been
handled in previous work.  The adjustments to our proof needed to explore the
cases $p\ge 3$ are straightforward.  Indeed, you simply iterate in
the corresponding spaces for any index $q$ in the $(1+\sqrt{2},3)$ range
and use Sobolev embeddings to bound the $p-q$ extra copies of the
solution in the nonlinearity.




\subsection{Setting}
 We are interested in solving the equation
\eqref{nlw}
 \begin{equation}\label{maineqn}
\Box_K u = F_p(u), \qquad u |_{\Sigma^-} = f, \qquad \tilde T u |_{\Sigma^-} = g,
\end{equation}
 where we assume \eqref{key}.  The initial data are taken to have
 compact support and to be subject to \eqref{Th-as}.  We choose
 $R>3.5M$ sufficiently large so that the supports of $f$ and $g$ are
 contained within $\{r\le R\}$ and so that $(\rho^2/r^2)\Box_K$
 satisfies \eqref{hdecay} on $\{r\ge R\}$.

We let $\chi\in C^\infty (\R)$ satisfy $0\le \chi(r)\le 1$, $\chi(r)\equiv
0$ for $r\le R$, and $\chi(r)\equiv 1$ for $r>R+1$.
We shall utilize Theorem \ref{thm-wStri} with
$s=\frac{3}{2}-\frac{2}{p-1}$, which falls in the range $(1/2-1/p,1/2)$
precisely when $1+\sqrt{2}<p<3$.  For $\alpha =
\frac{4}{p}-\frac{2}{p-1} = \frac{2(p-2)}{p(p-1)}$, we define
\begin{equation}
   \|\phi\|_X =  \sum_{|\gamma|\leq 2} \Bigl(
  \| r^{-\alpha} \chi Z^{\gamma} \phi \|_{L^p L^p L^2} +
  \|\la r\ra^{-3/2-\delta} \partial^{\gamma} \phi\|_{L^2 L^2 L^2}
+\|\pa^\ga\pa \phi\|_{L^\infty L^2 L^2}\Bigr),\label{X-norm}
\end{equation}
\beeq\label{N-norm}
 \|g\|_N = \sum_{|\gamma|\leq 2} \Bigl(\| r^{- \alpha p} \chi^p Z^{\gamma} g \|_{L^1 L^1 L^2} + \|Z^{\gamma} g\|_{L^1 L^2 L^2}\Bigr)\ .
\eneq


\subsection{Main estimate}
In this section, we shall combine the weighted Strichartz estimates,
which are known to hold for small perturbations, and the localized
energy estimates, which are known to hold on Kerr provided $a\ll M$,
to prove our main estimate.  We shall also incorporate the necessary
vector fields and, as such, shall examine the associated commutators.

\begin{lem}\label{thm-key}
    Let $u$ be the solution to
\begin{equation}\label{lineqn}
  \Box_K u = G, \qquad u|_{\Sigma^-} = f, \qquad \tilde T u |_{\Sigma^-} = g.
 \end{equation}
 where $f$, $g$, $G(0,r\omega)$, and $\pa_\tv G(0,r\omega)$ are supported in the region $\{r\le  R\}$.
 Then there exists a constant $R_0$, such that for $R>R_0$, we have
\begin{equation}\label{linest}
\|u\|_X \lesssim \|f\|_{H^3}+\|g\|_{H^2}+ \|G\|_N \ .
\end{equation}
\end{lem}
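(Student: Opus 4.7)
My plan is to split $\|u\|_X$ into its three constituents and handle each using one of the two available tools: the higher-order localized energy estimate (Corollary following Lemma \ref{thm-tt}) for the energy and weighted-$L^2$ pieces, and the weighted Strichartz estimate (Theorem \ref{thm-wStri}), localized to the far region via a cutoff, for the $L^p L^p L^2$ piece.

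First I would apply the higher-order localized energy estimate with $n=2$, $F_1 = G$, $F_2 = 0$, giving
\[
\sup_\tv \sum_{|\alpha|\le 2} E[\pa^\alpha u](\tv) + \sum_{|\alpha|\le 2}\|\pa^\alpha u\|_{LE}^2
\les \|f\|_{H^3}^2 + \|g\|_{H^2}^2 + \sum_{|\alpha|\le 2}\|\pa^\alpha G\|_{L^1_\tv L^2_{r,\omega}}^2.
\]
The left side directly controls $\sum_{|\gamma|\le 2}\|\pa^\gamma \pa u\|_{L^\infty L^2 L^2}^2$ via $E[\pa^\alpha u](\tv)\sim \|\pa \pa^\alpha u(\tv)\|_{L^2}^2$. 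Since the $l^{-3/2}_\infty(L^2)$ component of the $LE$ norm carries no trapping-degeneracy (it is only the gradient piece that carries the $(1-\tchi)$), a routine dyadic-scale summation yields $\|\la r\ra^{-3/2-\delta}\pa^\gamma u\|_{L^2L^2L^2}\les \|\pa^\gamma u\|_{LE}$ for each $|\gamma|\le 2$. The right side is dominated by $\|f\|_{H^3}^2 + \|g\|_{H^2}^2 + \|G\|_N^2$ since $\pa \subset Z$.

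For the Strichartz component $\sum_{|\gamma|\le 2}\|r^{-\alpha}\chi Z^\gamma u\|_{L^p L^p L^2}$, I would introduce a smooth cutoff $\tchi$ with $\tchi \equiv 1$ on $\{r \ge R\}$ and $\tchi \equiv 0$ on $\{r \le R - 1\}$, and set $w_\gamma = \tchi Z^\gamma u$ for $|\gamma|\le 2$. By the support assumption on the initial data, $w_\gamma|_{\tv=0} = \pt w_\gamma|_{\tv=0} = 0$, and on $\{r \ge R-1\}$ the rescaled d'Alembertian $(\rho^2/r^2)\Box_K$ agrees with $\Box_h$ for some $h$ satisfying \eqref{hdecay} with $\delta$ as small as we like once $R$ is large. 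A direct computation yields
\[
\Box_h w_\gamma = \frac{\rho^2}{r^2}\Bigl(\tchi Z^\gamma G + [\Box_K, \tchi] Z^\gamma u + \tchi [\Box_K, Z^\gamma] u\Bigr).
\]
The choice $s = \tfrac{3}{2} - \tfrac{2}{p-1}$ lies in $(\tfrac{1}{2} - \tfrac{1}{p}, \tfrac{1}{2})$ precisely because $1+\sqrt{2} < p < 3$; with this $s$ the Strichartz exponents collapse to $\tfrac{3}{2}-\tfrac{4}{p}-s = -\alpha$ and $-\tfrac{1}{2}-s = -\alpha p$. Placing the source $(\rho^2/r^2)\tchi Z^\gamma G$ into $G_1$ recovers $\|r^{-\alpha p}\chi^p Z^\gamma G\|_{L^1L^1L^2}$ up to a contribution on the compact annulus $\{R-1\le r\le R+1\}$ where $\tchi$ and $\chi^p$ differ; there the $r$-range is bounded and H\"older in $r$ bounds the excess by $\|Z^\gamma G\|_{L^1L^2L^2}$, which is the second term in $\|G\|_N$. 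Since $\chi \le \tchi$ pointwise, the resulting bound controls $\|r^{-\alpha}\chi Z^\gamma u\|_{L^pL^pL^2}$.

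The cutoff commutator $[\Box_K, \tchi] Z^\gamma u$, placed in $G_2$, is supported in the compact annulus $\{R-1\le r\le R\}$, which lies strictly outside the trapping region since $R > 3.5M$. There the positive weight $\la r\ra^{3/2-s+\delta_1}$ is bounded, and the $L^2_{\tv,r,\omega}$ norm of this commutator is dominated by $\sum_{|\alpha|\le 2}\|\pa^\alpha u\|_{LE}$, already controlled in Step 1. The main obstacle is the interior commutator $\tchi[\Box_K, Z^\gamma]u$: when $Z^\gamma$ contains rotations $\Omega$, the coefficients $\Omega h$ decay only like $\la r\ra^{-\rho}$, a priori insufficient against the positive weight $\la r\ra^{2/(p-1)+\delta_1}$ demanded by Theorem \ref{thm-wStri}. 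This is overcome by exploiting smallness from two sources: taking $R_0$ large so that $\tchi$ is supported where the coefficient's decay provides a small prefactor, and the smallness of $a/M$ (since $\pa_{\phi_+}$ is a Killing field and the non-axial components of $[\Box_K, \Omega]$ scale linearly in $a$). The resulting small-constant estimate allows the interior commutator to be absorbed into $\|u\|_X$ on the left, closing \eqref{linest}.
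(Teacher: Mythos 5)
The overall architecture you propose is the same as the paper's: control the energy and $LE$ components of $\|u\|_X$ by the higher-order localized energy estimate \eqref{main.estimate.inhom.n}, then cut off to the far region, rewrite the equation as a small Minkowski perturbation, and apply the weighted Strichartz estimate to the remaining $L^pL^pL^2$ piece, with the cutoff commutator $[\Box_K,\chi]Z^\gamma u$ (compactly supported in $\{r\ge 3.5M\}$) fed back through the localized energy norm. Your observations about the vanishing data for the cutoff solution, the collapse of exponents for $s=\tfrac32-\tfrac{2}{p-1}$, and the treatment of $\chi-\chi^p$ on the compact annulus all match the paper.

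However, there is a genuine gap in your treatment of the interior commutator $\chi[\Box_K,Z^\gamma]u$. You claim the coefficients $\Omega h$ ``decay only like $\la r\ra^{-\rho}$'' and propose to fix the resulting weight mismatch by an absorption argument using smallness of $a/M$ and large $R_0$. This cannot work as stated: the $G_2$ contribution to \eqref{weiStri} carries the weight $\la r\ra^{3/2-s+\delta_1-\rho}$, and if $\rho$ is the generic decay rate from \eqref{hdecay} (which for $h=g_K-m$ is $\rho=1$), that exponent is positive for $s<1/2$, so the $L^2$ norm in question can be infinite for functions with only finite energy. No amount of smallness in a multiplicative prefactor can absorb an unbounded quantity into the left side. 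The paper's resolution is structural, not perturbative: the commutator estimate \eqref{vfComm} exploits the precise Kerr asymptotics to give $[\Box_K,\Omega^\alpha\partial^\beta]u = \CO(r^{-2})\sum|\Omega^\mu\partial^\nu\partial u| + \CO(r^{-3})\sum|\Omega^\mu\partial^\nu u|$ with $|\mu|\le|\alpha|-1$. The $r^{-2}$ decay (rather than $r^{-\rho}$) arises because the angular dependence of the Kerr metric enters only at order $a/r^2$, the spherically symmetric $M/r$ part being annihilated by $\Omega$. With $0<\delta<s$ the weight then becomes $\la r\ra^{-1/2-s+\delta}\lesssim\la r\ra^{-1/2}$, which is controlled directly by the $LE$ norm; no absorption is needed. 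A second point you omit is the need for an intermediate estimate like \eqref{leDn2}, $\sum_{|\alpha|+|\beta|\le 2,\,|\alpha|\le 1}\|\partial^\beta\chi Z^\alpha u\|_{LE}\lesssim\cdots$, because the commutator terms still carry up to one rotation $\Omega^\mu$ with $|\mu|\le 1$, which the purely $\partial$-derivative bound \eqref{main.estimate.inhom.n} does not control by itself; the fact that the commutator \emph{drops} one $\Omega$ (the constraint $|\mu|\le|\alpha|-1$) is what lets this intermediate estimate close.
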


\begin{proof}
  We first note that the latter two terms of \eqref{X-norm} are
  trivially controlled by the right side of \eqref{linest} using \eqref{main.estimate.inhom.n}.

Next, we record that
\[[\Box_K,\partial]  u = \CO(r^{-2}|\partial^2 u| + r^{-3}|\partial
u|)\]
and
\[[\Box_K,\Omega]u = \CO(a r^{-2} |\partial^2 u| + a r^{-3}|\partial
u|).\]
More generally,
\begin{equation}\label{vfComm}[\Box_K, \Omega^\alpha \partial^\beta] u =
\CO(r^{-2})\sum_{\substack{|\mu|+|\nu|\le |\alpha|+|\beta|\\|\mu|\le
    |\alpha|-1}} |\Omega^\mu \partial^\nu \partial u|
+ \CO(r^{-3}) \sum_{\substack{|\mu|+|\nu|\le |\alpha|+|\beta|\\|\mu|\le
    |\alpha|-1}} |\Omega^\mu \partial^\nu u|.
\end{equation}

Thus, we see that if $|\alpha|=1$, using \eqref{main.estimate.inhom.n} with $n=|\beta|$,
\begin{multline}
  \label{leDn}
 \|\partial^\beta \chi \Omega^\alpha  u\|_{LE}
   \lesssim \|\Omega^\alpha
  f\|_{H^{|\beta|+1}} + \|\Omega^\alpha g\|_{H^{|\beta|}} \\
  +
  \sum_{|\nu|\le |\beta|} \| \partial^\nu \Omega^\alpha G\|_{L^1L^2L^2}
  +  \sum_{|\nu|\le |\beta|}\|\pa^\ga [\Box_K,\chi] \Omega^\al
  u\|_{l^{1/2}_1(L^2L^2L^2)} \\+ \sum_{|\nu| \le |\beta|} \|\partial^\nu \chi
  [\Box_K, \Omega^\alpha] u\|_{l^{1/2}_1(L^2L^2L^2)}.
\end{multline}
As $[\Box_K,\chi]$ is compactly supported and the coefficients of $\Omega$ are $\CO(1)$ on
the support of $[\Box_K,\chi]$, we have
\[ \sum_{|\nu|\le |\beta|}\|\pa^\ga [\Box_K,\chi] \Omega^\al
  u\|_{l^{1/2}_1(L^2L^2L^2)} \lesssim \sum_{|\nu|\le |\beta|+1}
\|\partial^\nu u\|_{LE}.\]
Similarly, using the commutator estimate above,
\[\sum_{|\nu|\le |\beta|} \|\partial^\nu \chi [\Box_K,\Omega^\alpha]
u\|_{l^{1/2}_1(L^2L^2L^2)} \lesssim \sum_{|\nu|\le |\beta|+1}
\|\partial^\nu u\|_{LE}.\]
  If we, in turn, apply
\eqref{main.estimate.inhom.n} with $n=|\beta|+1$, it follows that
\begin{equation}
  \label{leDn2}
  \sum_{\substack{|\alpha|+|\beta|\le 2\\|\alpha|\le 1}} \| \partial^\beta \chi Z^\alpha u\|_{LE}
  \lesssim \|f\|_{H^3} + \|g\|_{H^2} +
  \sum_{|\alpha|\le 2} \|Z^\alpha G\|_{L^1L^2L^2}.
\end{equation}
Higher order estimates akin to this have previously appeared in, e.g., \cite{MTT}.

We now turn to bounding the first term in \eqref{X-norm}.  We note
that
\[\Box_K \chi Z^\gamma u = \chi Z^\gamma G + [\Box_K,\chi] Z^\gamma u
+ \chi[\Box_K,Z^\gamma]u.\]
We also note that $(\rho^2/r^2)\Box_K$ satisfies the requirements
\eqref{hdecay} on the support of $\chi$ when $R$ is sufficiently large.  The support conditions on
$f$, $g$, and $G$ guarantee that the Cauchy data for $\chi Z^\gamma u$
vanish.  Using that $\rho^2/r^2$ is $\CO(1)$ on the support of $\chi$,
it follows from \eqref{weiStri} that
\begin{multline}\label{rhs}\sum_{|\gamma|\le 2} \|r^{-\alpha} \chi Z^\gamma u\|_{L^pL^pL^2}\lesssim
\sum_{|\gamma|\le 2} \|r^{-\alpha p} \chi Z^\gamma G\|_{L^1L^1L^2} \\+
\sum_{|\gamma|\le 2} \|r^{\frac{3}{2}-s+\delta}[\Box_K,\chi]
Z^\gamma u\|_{L^2L^2L^2} +
\sum_{|\gamma|\le 2} \|r^{\frac{3}{2}-s+\delta}\chi [\Box_K,Z^\gamma] u\|_{L^2L^2L^2}.
\end{multline}
We first note that
\[\|r^{-\alpha p} \chi Z^\gamma G\|_{L^1 L^1 L^2} \lesssim
\|r^{-\alpha p} \chi^p Z^\gamma G\|_{L^1 L^1 L^2} + \|Z^\gamma
G\|_{L^1 L^2L^2}\]
since $\chi-\chi^p$ is supported where $r\in [R,R+1]$.
As the weight in the second term in the right side of \eqref{rhs} is $\CO(1)$ on the support of $[\Box_K,\chi]$ and as
that support is contained in $\{r\ge 3.5M\}$, it follows that the
second term on the right is bounded by $\sum_{{\gamma}\le 2}
\|\partial^\gamma u\|_{LE}$ and can be controlled using
\eqref{main.estimate.inhom.n}.  Similarly, we can choose $0<\de<s$ and use \eqref{vfComm} to control the last
term on the right by
\[\sum_{\substack{|\gamma|+|\mu|\le 2\\|\gamma|\le 1}}
\|\partial^\mu \chi Z^\gamma  u\|_{LE}+\sum_{\substack{|\mu|\le 2}}
\|\partial^\mu u\|_{LE},\]
for which \eqref{leDn2} and \eqref{main.estimate.inhom.n} provide the desired bound.
\end{proof}

\subsection{The Strauss conjecture}
We can now prove Theorem \ref{Glob-Kerr}.  We solve \eqref{maineqn}
via iteration.  We set $u_0\equiv 0$ and recursively define $u_{k+1}$
to be the solution to the linear equation
\beeq\label{3iterate}
  \Box_K u_{k+1} = F_p(u_{k}), \qquad   u|_{\Sigma^-} = f, \qquad
  \tilde T u|_{\Sigma^-} = g.
\eneq

{\em Boundedness:}
By the smallness condition \eqref{Th-as} on the data as well as the
condition imposed on their supports, it follows from Lemma
\ref{thm-key} that there is a universal constant $C_1$ so that
\[\|u_1\|_X \le C_1\varepsilon\]
and
\[\|u_{k+1}\|_X \le C_1\varepsilon + C_1 \|F_p(u_k)\|_N.\]

We shall argue inductively to prove that
\[\|u_{k+1}\|_X \le 2C_1 \varepsilon.\]
By the above, it suffices to show
\[\|F_p(u_k)\|_N\le \varepsilon.\]
By condition \eqref{key}, we have
\[|Z^\gamma F_p(u_k)|\lesssim |u_k|^{p-1} |Z^\gamma u_k| +
|u_k|^{p-2}\Bigl(\sum_{|\beta|\le 1} |Z^\beta u_k|\Bigr)^2\]
for $|\gamma|\le 2$.

We start with bounding the first term in \eqref{N-norm}.  We first
note that
\begin{multline*}\sum_{|\gamma|\le 2} \| r^{-\alpha p} \chi^p Z^\gamma
F_p(u_k)\|_{L^1L^1L^2} \lesssim
\|r^{-\alpha} \chi u_k\|^{p-1}_{L^pL^pL^\infty} \sum_{|\gamma|\le 2}
\|r^{-\alpha} \chi Z^\gamma u_k\|_{L^pL^pL^2}
\\+ \|r^{-\alpha} \chi u_k\|^{p-2}_{L^pL^pL^\infty} \Bigl(\sum_{|\beta|\le 1}
\|r^{-\alpha} \chi Z^\beta u_k\|_{L^pL^pL^4}\Bigr)^2.
\end{multline*}
By the $H^2_\omega\subset L^\infty_\omega$ and $H^1_\omega\subset
L^4_\omega$ Sobolev embeddings on $\S^2$, it follows that the right
side above is $\CO(\|u_k\|^p_X)$.

We now proceed to the second term in \eqref{N-norm}.  We first observe
that
\[\sum_{|\gamma|\le 2} \|u_k^{p-1} Z^\gamma u_k\|_{L^1L^2L^2(r\ge
  R+2)}
\lesssim \|r^{\frac{\alpha}{p-1}}
u_k\|^{p-1}_{L^pL^{\frac{2p(p-1)}{p-2}}L^\infty(r\ge R+2)} \sum_{|\gamma|\le 2}
\|r^{-\alpha} \chi Z^\gamma u_k\|_{L^pL^pL^2}\]
Applying \eqref{ap-S}, it follows that the right side is
\[\lesssim\Bigl(\sum_{|\gamma|\le 2} \|r^{\frac{\alpha-1}{p-1}} \chi Z^\gamma
  u_k\|_{L^pL^pL^2}\Bigr)^{p-1} \sum_{|\gamma|\le 2} \|r^{-\alpha}
  \chi Z^\gamma u_k\|_{L^pL^pL^2}.\]
As $\frac{\alpha-1}{p-1}\le -\alpha$ for $p\le 3$, it follows that
this is also $\CO(\|u_k\|_X^p)$.  Moreover,
\begin{multline*}\sum_{|\gamma|\le 2} \|u_k^{p-1} Z^\gamma u_k\|_{L^1L^2L^2(r\le
  R+2)} \\\lesssim
\|u_k\|^{p-2}_{L^\infty L^\infty L^\infty(r\le R+2)}
\|u_k\|_{L^2L^\infty L^\infty(r\le R+2)} \sum_{|\gamma|\le 2}
\|\partial^\gamma u_k\|_{L^2 L^2L^2(r\le R+2)}.
\end{multline*}
Sobolev embeddings allow us to control this by
\begin{equation}\label{locTerm}\Bigl(\sum_{|\gamma|\le 2} \|\partial^\gamma \partial
u_k\|_{L^\infty L^2 L^2}\Bigr)^{p-2} \Bigl(\sum_{|\gamma|\le 2}
\|\partial^\gamma u_k\|_{L^2L^2L^2(r\le R+3)}\Bigr)^2,\end{equation}
which is also $\CO(\|u_k\|_X^p)$.


We similarly examine
\begin{multline*}\Bigl\|u^{p-2}_k \sum_{|\beta|\le 1} Z^\beta u_k\Bigr\|_{L^1L^2L^2(r\ge
  R+2)} \\\lesssim \|r^{\frac{1}{p}} u_k\|^{p-2}_{L^p L^\infty
    L^\infty(r\ge R+2)} \Bigl(\sum_{|\beta|\le 1} \|r^{-\frac{1}{p} +
    \frac{4-p}{2p}} Z^\beta u_k\|_{L^p L^4 L^4(r\ge R+2)}\Bigr)^2.
\end{multline*}
Applications of \eqref{ap-S4} and \eqref{ap-Sinfty}, it follows that
this is bounded by
\[\Bigl(\sum_{|\gamma|\le 2} \|r^{-\frac{1}{p}} \chi Z^\gamma
u_k\|_{L^pL^pL^2}\Bigr)^p.\]
As $\alpha\le 1/p$ for $p\le 3$, we have that these terms are
$\CO(\|u_k\|_X^p)$.  It remains to bound
\[\Bigl\|u^{p-2}_k \sum_{|\beta|\le 1} Z^\beta u_k\Bigr\|_{L^1L^2L^2(r\le
  R+2)} \lesssim \|u_k\|^{p-2}_{L^\infty L^\infty
    L^\infty(r\le R+2)} \Bigl(\sum_{|\beta|\le 1} \|Z^\beta u_k\|_{L^2
    L^4 L^4(r\le R+2)}\Bigr)^2.\]
Using Sobolev embeddings, this is estimated by \eqref{locTerm}, which
as noted above is $\CO(\|u_k\|_X^p)$.

Combining the above, we have
\[\|F_p(u_k)\|_N\le C \|u_k\|_X^p \le C (C_1 \varepsilon)^p,\]
where we have employed the inductive hypothesis.  As long as
$\varepsilon$ is chosen sufficiently small that $C C_1^p
\varepsilon^{p-1}\le 1$, the proof of boundedness is complete.

{\em Convergence of the sequence $\{u_k\}$:}  We shall complete the
proof by showing that the sequence $\{u_k\}$ is Cauchy in $X$.  For
$k\ge 1$, we have
\[\|u_{k+1}-u_k\|_X \le C_1 \|F_p(u_k)-F_p(u_{k-1})\|_N.\]
Mimicking the proof above shows that
\begin{align*}\|F_p(u_k)-F_p(u_{k-1})\|_N &\le C(\|u_k\|^{p-1}_X +
\|u_{k-1}\|_X^{p-1}) \|u_k-u_{k-1}\|_X \\&\le 2C C_1^{p-1}
\varepsilon^{p-1} \|u_k-u_{k-1}\|_X.
\end{align*}
For all $\varepsilon$ sufficiently small, this implies that
\[\|u_{k+1}-u_k\|_X \le \frac{1}{2}\|u_k-u_{k-1}\|_X,\]
which suffices to show that the sequence is Cauchy, and hence
completes the proof of Theorem \ref{Glob-Kerr}.



\section{The Strauss conjecture on the Schwarzschild background}\label{SchwSection}

As pointed out before, the technical assumption of compactly support for the initial data can be removed  in the case of the Schwarzschild background ($a=0$) by adapting the arguments in \cite{SoWa10} and \cite{WaYu11}. 

Consider the evolution of the nonlinear waves in the cylindrical region $\M$,
\begin{equation}\label{ap-nlw2}
\Box_S u = F_p(u), \qquad u_{|\Sigma^-} = f, \qquad \Tilde T u_{|\Sigma^-} = g\ .
\end{equation}
We have the following theorem, which is analogous to Theorem\ref{Glob-Kerr}:
\begin{thm}\label{ap-Glob-Schw}
Let $p>1+\sqrt{2}$.
 Then there exists a global solution $u$ in $\M$ for the problem \eqref{ap-nlw2}, provided that the initial data $(f,g)$ satisfy
$$
 E[f, g] := \sum_{|\gamma|\leq 3} \|Y^{\gamma} f\|_{\dot H^s \cap L^2} + \sum_{|\gamma|\leq 2} \|Y^{\gamma} g\|_{\dot H^{s-1} \cap L^2}
< \ep\ll 1$$
for $s=s(p)= \frac32 - \frac{2}{q-1}$.
Moreover, there is a large constant $R_0$, depending only on $M$, such that we have the following property: 
for any $\delta > 0$, there exists a $C>0$, depending on $F_p$, $R\ge R_0$, $\delta$ and $M$, so that  we have the following estimate for $u$:
\[
\sum_{|\gamma|\leq 2} \Bigl(
  \| r^{-\alpha} \chi Y^{\gamma} u\|_{L^q_\tv L^q_r
    L^2_\omega} + \|\la r\ra^{-3/2-\delta} \nabla_x^\gamma u\|_{L^2_\tv
    L^2_r L^2_\omega}+ \|\nabla_x^\gamma \partial u\|_{L^\infty_\tv
    L^2_r L^2_\omega}\Bigr)\le C \varepsilon.
\]
Here $q=p$ if $p\in (1+\sqrt{2}, 3)$ and $q\in (1+\sqrt{2},3)$ if $p\ge 3$,
$\alpha = \frac4q - \frac2{q-1}$, and $\chi(r)$ is a cutoff
function supported when $r>R$ so that $\chi = 1$ when $r>R+1$.
\end{thm}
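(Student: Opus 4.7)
The plan is to adapt the proof of Theorem \ref{Glob-Kerr} to allow non-compactly supported initial data, exploiting the additional symmetries of the Schwarzschild metric. First I would observe that on Schwarzschild both $\tilde T$ and the angular momentum generators $\Omega$ commute with $\Box_S$ modulo the same $\CO(r^{-2})$ commutators as in \eqref{vfComm}. In particular, only spatial vector fields $Y=\{\nabla_x,\Omega\}$ are needed to generate the $X$-norm, which is why the data norm involves $Y^\gamma$ acting on $f,g$ in $\dot H^s \cap L^2$ and $\dot H^{s-1}\cap L^2$ rather than a compact support hypothesis.

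The heart of the argument is a Schwarzschild analog of Lemma \ref{thm-key} that allows general data: for solutions of $\Box_S u = G$ with data $(f,g)$, one should have
\[
\|u\|_X \lesssim E[f,g] + \|G\|_N.
\]
To prove this, I would split $u = u_{\text{hom}} + u_{\text{inh}}$, where $u_{\text{inh}}$ has vanishing data and source $G$, and $u_{\text{hom}}$ is the free evolution of $(f,g)$. For $u_{\text{inh}}$ the argument of Lemma \ref{thm-key} carries over almost verbatim, since compact support of the data was used there only to ensure that the cutoff $\chi Z^\gamma u$ had vanishing Cauchy data, which is automatic here. For $u_{\text{hom}}$, I need to upgrade Theorem \ref{thm-wStri} to a homogeneous estimate of the form
\[
\|\la r\ra^{3/2-4/p-s} w\|_{L^p_{\tv,r}L^2_\omega} \lesssim \|w(0,\cdot)\|_{\dot H^s} + \|\tilde T w(0,\cdot)\|_{\dot H^{s-1}},
\]
for $w$ solving $\Box_h w=0$ on the asymptotic Minkowski perturbation. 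Such an estimate follows from the same real interpolation scheme as in the proof of Theorem \ref{thm-wStri}, now taking the homogeneous (data-driven) versions of Lemma \ref{lemmaKSSpert} and Lemma \ref{thm-kss2} as endpoints, followed by the sphere trace theorem \eqref{trace}.

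The main obstacle is gluing the two regimes cleanly: when $u_{\text{hom}}$ is cut off by $\chi$, the product $\chi Z^\gamma u_{\text{hom}}$ acquires nonzero Cauchy data supported in the annulus $\{R \le r \le R+1\}$, which must be absorbed through the localized energy norm in the transition region. I would handle this following the splitting scheme of \cite{SoWa10, WaYu11}: decompose the data itself via $\chi$ into an outer piece (propagated on the asymptotically flat region using the homogeneous weighted Strichartz estimate above) and an inner piece which is now compactly supported (handled by the Kerr-type argument with $a=0$). Finite speed of propagation together with the $\dot H^s\cap L^2$ hypothesis ensure the two pieces can be matched with losses absorbed into the $E[f,g]$ norm, with the $L^2$ component of the data norm handling the low-frequency part near spatial infinity.

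Once this linear estimate is in hand, the iteration scheme of Section \ref{sectionExistence} applies unchanged. The nonlinear bounds $\|F_p(u_k)\|_N \lesssim \|u_k\|_X^p$ and the corresponding contraction estimate depend only on the weighted Strichartz and weighted Sobolev embeddings \eqref{ap-S}--\eqref{ap-Sinfty} for $u_k$ in $X$, not on any support properties of the iterates, so boundedness and convergence of $\{u_k\}$ in $X$ follow exactly as in the proof of Theorem \ref{Glob-Kerr}.
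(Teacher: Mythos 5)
Your plan correctly identifies that the central missing ingredient is a \emph{homogeneous} (data-driven) version of the weighted Strichartz estimate, and the iteration and nonlinear bootstrap do indeed carry over unchanged. However, there is a genuine gap at the point where you claim to obtain the homogeneous version of Lemma \ref{thm-kss2}. Lemma \ref{lemmaKSSpert} already comes with a $\|\partial w(0,\cd)\|_2$ term, so the $\dot H^1\times L^2$ endpoint poses no difficulty. But Lemma \ref{thm-kss2} — the $L^2\times \dot H^{-1}$ endpoint — is proved by duality against $\Box_h u = F$ with vanishing data on the $t=T$ slice, and this duality argument picks up uncontrolled boundary terms the moment $w$ has nonzero initial data. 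For a general small time-dependent perturbation $h$ satisfying \eqref{hdecay}, there is no evident way to manufacture the homogeneous bound
\[
\|w\|_{l^{-1/2}_\infty(L^2_tL^2_x)\cap L^\infty_tL^2_x}\lesssim \|w_0\|_{L^2}+\|w_1\|_{\dot H^{-1}},
\]
since one cannot simply ``divide through by a derivative'' in a time-dependent setting. Your proposal would therefore stall exactly here.

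This is precisely the content of Remark \ref{ap-rem2}: the proof hinges on rewriting $\Box_S$ as $-\weight^{-1}(\partial_t^2+Q)$ and conjugating to a time-independent self-adjoint operator $P=\weight^{1/2}Q\weight^{1/2}$, extended to a global self-adjoint $P_1$ on $\R^3$. Because $P_1$ commutes with $\partial_t$ and is self-adjoint, one may apply $P_1^{-1/2}$ to the homogeneous solution, reducing the $L^2\times\dot H^{-1}$ estimate \eqref{SchwL2} to the already-known $\dot H^1\times L^2$ estimate \eqref{SchwH1}; Lemma \ref{P1est} is what lets you pass between $P_1^{1/2}$ and $\nabla_x$ in the relevant weighted spaces. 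The static structure of the Schwarzschild metric is therefore essential, not merely convenient, and it is also why the compact-support hypothesis cannot at present be removed for Kerr, whose d'Alembertian is not of the form $\partial_t^2+P$. Your proposal does not exploit this structure and, as written, asserts an interpolation endpoint that does not follow from the lemmas you cite.
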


\begin{rem}\label{ap-rem2}
The key fact for us to prove Theorem \ref{ap-Glob-Schw} is that we can rewrite the D'Alembertian as $\pt^2+P$ for a certain self-adjoint time-independent Laplacian $P$.
\end{rem}

As in the Kerr case above, we shall focus only on $p\in
(1+\sqrt{2},3)$ and will drop the $q$ notation.

The only important change in the argument is the following version of the weighted Strichartz estimates, which replaces Theorem \ref{thm-wStri}
\begin{prop}[Weighted Strichartz estimates for Schwarzschild]\label{ap-thm-le-0wS}

Let $u$ solve the equation $\Box_S u=G_1+G_2$ with initial data $(f, g)$ on $\{\tv=0\}$. Moreover, assume that $u$ vanishes in the region $\{r<K M\}$ for some large number $K>0$. Then for any $p\ge 2$, $1/2-1/p<s<1/2$ and $\de>0$, we have
\beeq\label{ap-eq-le-0wS}
\begin{array}{ll}
  \|r^{3/2-4/p-s} u\|_{L^p_{\tv,r}L^2_\omega(\M)} & \lesssim  \|f\|_{\dot H^s}+\|g\|_{\dot H^{s-1}}\\
  &+\|r^{-1/2-s}G_1\|_{L^1_{\tv,r} L^2_\omega}+\|r^{3/2-s+\de}G_2\|_{L^2_{\tv,x}}.
\end{array}
\eneq
\end{prop}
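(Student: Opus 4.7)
The plan is to follow the interpolation scheme used in the proof of Theorem \ref{thm-wStri}, but to replace the two perturbative endpoint lemmas by Schwarzschild counterparts that accommodate non-trivial initial data. The key observation, highlighted in Remark \ref{ap-rem2}, is that $\Box_S=\pt^2+P$ for a time-independent self-adjoint operator $P$. This lets us express the homogeneous solution as $\cos(\tv\sqrt{P})f+\sqrt{P}^{-1}\sin(\tv\sqrt{P})g$ and bound it cleanly in $L^\infty_\tv\dot H^s$ by $\|f\|_{\dot H^s}+\|g\|_{\dot H^{s-1}}$, where $\dot H^s$ is defined via the functional calculus of $P$. Since $u$ is supported in $\{r\ge KM\}$ and the data of interest live in the asymptotic region, these spaces agree with the Euclidean ones built from $-\Delta$ up to negligible error.

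The first step is a localized energy estimate analogous to Lemma \ref{lemmaKSSpert}, namely
\[\|\pa u\|_{l^{-1/2}_\infty(L^2)\cap L^\infty_\tv L^2_x}+\|u\|_{l^{-3/2}_\infty(L^2)}\lesssim \|f\|_{\dot H^1}+\|g\|_{L^2}+\|G_1\|_{L^1_\tv L^2_x}+\|G_2\|_{l^{1/2}_1(L^2_{\tv,x})}.\]
Because $u$ vanishes for $r<KM$, the photon-sphere trapping at $r=3M$ plays no role, and this estimate follows either directly from the Schwarzschild localized energy results of \cite{BS, BlSt06, DaRo, DaRo09, MMTT} or from the perturbative bound of \cite{MetSo06, MetSo07, MetTa07}, with the initial data contribution routed through the spectral calculus of $P$. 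A dual companion analogous to Lemma \ref{thm-kss2}, namely
\[\|u\|_{l^{-1/2}_\infty(L^2)\cap L^\infty_\tv L^2_x}\lesssim \|f\|_{L^2}+\|g\|_{\dot H^{-1}}+\|G_1\|_{L^1_\tv\dot H^{-1}_x}+\|G_2\|_{l^{1/2}_1(L^2_\tv\dot H^{-1}_x)},\]
then follows by exactly the same duality argument, invoking time-reversibility of the static Schwarzschild evolution to solve the adjoint problem and again using the functional calculus of $P$ to identify the initial-data terms.

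Given these two endpoints, the remainder of the proof mirrors the proof of Theorem \ref{thm-wStri} step for step. Real interpolation in the index $s'\in(0,1)$, combined with $[\dot H^{s_0},\dot H^{s_1}]_{\theta,2}=\dot H^{(1-\theta)s_0+\theta s_1}$, produces an intermediate bound on $\|u\|_{L^\infty_\tv\dot H^{s'}_x}+\|u\|_{l^{-1/2-s'}_2(L^2_{\tv,x})}$ whose right-hand side now includes $\|f\|_{\dot H^{s'}}+\|g\|_{\dot H^{s'-1}}$, which is the analogue of \eqref{5}. Composing with the sphere trace \eqref{trace} yields the analogue of \eqref{6}, further interpolation produces the analogue of \eqref{7}, and finally duality of the trace for the $G_1$ term together with the Sobolev embedding $\dot H^{1-s}\subset L^{6/(2s+1)}$ for $G_2$, exactly as in \eqref{8}--\eqref{9}, delivers \eqref{ap-eq-le-0wS}.

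The main obstacle is therefore concentrated in the two endpoint estimates with non-vanishing initial data: in the Kerr/perturbative setting of Section~3 the vanishing-data hypothesis made the duality proof of Lemma \ref{thm-kss2} completely clean, whereas here one must thread the data through the correct homogeneous Sobolev scales, which is precisely what the time-independent self-adjoint structure $\Box_S=\pt^2+P$ provides. A secondary nuisance is that the $\dot H^s$ appearing in the statement are, strictly speaking, defined by $P$ rather than by $-\Delta$, but the support condition $r\ge KM$ guarantees that these two scales agree for every function that actually enters the argument.
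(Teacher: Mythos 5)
Your overall plan---rewrite the equation using the time-independent self-adjoint structure, obtain two localized-energy endpoints that carry the initial data, then real-interpolate and close with the trace lemma exactly as in the proof of Theorem \ref{thm-wStri}---is the paper's plan, but two steps are glossed over in a way that hides real work.

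First, the identity ``$\Box_S=\pt^2+P$ with $P$ self-adjoint'' is not true as written. In $(t,r,\omega)$ coordinates one has $\Box_S=-\weight^{-1}(\pt^2+Q)$, and $Q$ is self-adjoint only with respect to the weighted measure $\weight^{-1}dx$, not $dx$. The paper must conjugate, setting $w=\weight^{-1/2}u$, $P=\weight^{-1/2}Q\weight^{1/2}$, so that $\Box_S u=G$ becomes $(\pt^2+P)w=-\weight^{1/2}G$ with $P$ self-adjoint on standard $L^2(\R^3)$; it then extends $P$ (using the support hypothesis) to a globally defined $P_1$ on $\R^3$. This conjugation changes the data and the source, which is why the paper passes through a separate Proposition~\ref{ap-thm-le-0wS-1} for $(\pt^2+P_1)$ and needs a fractional Leibniz estimate, Lemma~2.7 of \cite{WaYu11}, to return to the statement for $u$, $f$, $g$. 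Your write-up skips this entirely.

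Second, your treatment of the ``second endpoint'' and of the passage between $P$-Sobolev and Euclidean Sobolev scales is too optimistic. Duality with boundary terms can be made to work, but the paper instead applies the $H^1$-level estimate \eqref{SchwH1} to $P_1^{-1/2}w_{hom}$ (legitimate because $\pt$ and $P_1$ commute with $P_1^{-1/2}$) and then must translate the resulting norms. That translation is where the content is: $P_1^{1/2}$ has symbol $\sqrt{-\De}$ plus an order-zero error $\tfrac1r e$, and Lemma~\ref{P1est} proves the weighted comparison $\|P_1^{1/2}v\|_{l^{-1/2}_\infty(L^2)}\lesssim\|\nabla_x v\|_{l^{-1/2}_\infty(L^2)}+\|v\|_{l^{-3/2}_\infty(L^2)}$ by explicit dyadic kernel estimates for the error operator, as well as $\|P_1^{s/2}u\|_{L^2}\simeq\|u\|_{\dot H^s}$ for $s\in[-1,1]$ via Hardy's inequality and interpolation. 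Your phrase ``these spaces agree with the Euclidean ones built from $-\Delta$ up to negligible error'' is asserting precisely this lemma without proving it; the support condition $r\ge KM$ alone does not make $P_1^{1/2}$ equal $\sqrt{-\De}$, it only makes the error term lower order and nonsingular, and that lower-order term still has to be controlled in the $l^{-1/2}_\infty(L^2)$ scale. Once these two points are supplied, the remaining interpolation and trace arguments go through as you describe.
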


 Let us now prove the Strauss conjecture, Theorem \ref{ap-Glob-Schw}, based on this Proposition.

 We choose $R = KM$, where $K$ is large enough so that Proposition \ref{ap-thm-le-0wS} holds. We let $\chi\in C^\infty(\R)$ satisfy $0\le \chi(r)\le 1$, $\chi(r)\equiv
0$ for $r\le R$, and $\chi(r)\equiv 1$ for $r>R+1$. For $\alpha = \frac{4}{p}-\frac{2}{p-1}$, we define
\begin{equation}
   \|\phi\|_X =  \sum_{|\gamma|\leq 2} \Bigl(
  \| r^{-\alpha} \chi Y^{\gamma} \phi \|_{L^p L^p L^2} +
  \|\la r\ra^{-3/2-\delta} \nabla_x^{\gamma} \phi\|_{L^2 L^2 L^2}
+\|\nabla_x^\ga\pa \phi\|_{L^\infty L^2 L^2}\Bigr),\label{Sc-X-norm}
\end{equation}
\beeq\label{Sc-N-norm}
 \|g\|_N = \sum_{|\gamma|\leq 2} \Bigl(\| r^{- \alpha p} \chi^p Y^{\gamma} g \|_{L^1 L^1 L^2} + \|Y^{\gamma} g\|_{L^1 L^2 L^2}\Bigr)\ ,
\eneq
where $\delta>0$ is arbitrarily fixed.

We can now utilize Proposition \ref{ap-thm-le-0wS} with
$s=\frac{3}{2}-\frac{2}{p-1}$, which falls in the range $(0,1/2)$
precisely when $1+\sqrt{2}<p<3$, to prove the equivalent of Lemma \ref{thm-key}:

 \begin{lem}\label{ap-key}
  Let $u$ be the solution to
\begin{equation}\label{ap-lineqn}
  \Box_S u = F, \qquad u_{|\Sigma^-} = f, \qquad \pa_{\tv} u_{|\Sigma^-} = g\ .
 \end{equation}
 Then we have for $1+\sqrt{2}<p<3$
\begin{equation}\label{ap-linest}
\|u\|_X \lesssim \|F\|_N + E[f, g]\ .
\end{equation}
 \end{lem}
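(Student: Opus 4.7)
The plan is to adapt the proof of Lemma \ref{thm-key} to the Schwarzschild setting. Two modifications are forced upon us: $Y=\{\nabla_x,\Omega\}$ replaces $Z$ (so that no $\pt$ derivative lands on the data, which must be remembered when applying the localized-energy estimate), and since $(f,g)$ need not have compact support the Cauchy data for $\chi Y^\gamma u$ no longer vanish and must be absorbed using the data terms in Proposition \ref{ap-thm-le-0wS}. As in the Kerr proof, I split $\|u\|_X$ into its three summands, handling the two $\nabla_x^\gamma$ pieces via the $a=0$ case of \eqref{main.estimate.inhom.n} and the $\chi Y^\gamma$ piece via \eqref{ap-eq-le-0wS}.

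For the two energy pieces, I apply \eqref{main.estimate.inhom.n} with $a=0$ directly to $Y^\gamma u$ for $|\gamma|\le 2$, commuting only $Y^\gamma$ (rather than $Z^\gamma$) through $\Box_S$ so that the forcing on the right only sees $Y^\gamma F$. Rotations commute with $\Box_S$ exactly by spherical symmetry, while $\nabla_x$ produces a commutator of the schematic shape \eqref{vfComm}, namely $\CO(Mr^{-2})\pa^2 u+\CO(Mr^{-3})\pa u$; these terms land in $l^{1/2}_1(L^2L^2L^2)$ with norm dominated by the LE norm at one-lower order, and the Kerr-style bootstrap leading to \eqref{leDn2} then yields
\[\sum_{|\gamma|\le 2}\Bigl(\|\langle r\rangle^{-3/2-\delta}\nabla_x^\gamma u\|_{L^2L^2L^2}+\|\nabla_x^\gamma\pa u\|_{L^\infty L^2L^2}\Bigr)\lesssim \|f\|_{H^3}+\|g\|_{H^2}+\sum_{|\gamma|\le 2}\|Y^\gamma F\|_{L^1L^2}.\]
The right-hand side is dominated by $E[f,g]+\|F\|_N$, since the $L^2$-components of the $Y^\gamma f$ and $Y^\gamma g$ norms supply $\|f\|_{H^3}$ and $\|g\|_{H^2}$.

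For the weighted Strichartz piece I apply Proposition \ref{ap-thm-le-0wS} to $w_\gamma=\chi Y^\gamma u$, choosing $R=KM$ so that $w_\gamma$ vanishes on $\{r<KM\}$. The inhomogeneous equation
\[\Box_S w_\gamma = \chi Y^\gamma F + [\Box_S,\chi] Y^\gamma u + \chi [\Box_S,Y^\gamma] u\]
places its first term into the $G_1$ slot of \eqref{ap-eq-le-0wS}, decomposed via $\chi=\chi^p+(\chi-\chi^p)$ into the first summand of $N$ plus an annular $L^1L^2$-remainder. The two commutator terms go into the $G_2$ slot: $[\Box_S,\chi]Y^\gamma u$ is supported in $R\le r\le R+1$ and bounded by the LE norm from the previous step, while $\chi[\Box_S,Y^\gamma]u$ is controlled via \eqref{vfComm} by LE norms of quantities already in hand. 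Since $\tilde T=\pa_{\tv}$ on $\mathrm{supp}\,\chi$, the Cauchy data for $w_\gamma$ reduce to $(\chi Y^\gamma f,\chi Y^\gamma g)$, whose contribution is
\[\|\chi Y^\gamma f\|_{\dot H^s}+\|\chi Y^\gamma g\|_{\dot H^{s-1}}\lesssim \|Y^\gamma f\|_{\dot H^s\cap L^2}+\|Y^\gamma g\|_{\dot H^{s-1}\cap L^2}\le E[f,g],\]
since multiplication by the smooth, radial, origin-avoiding cutoff $\chi$ is bounded on $\dot H^s$ and $\dot H^{s-1}$ modulo an $L^2$-loss absorbed into $E[f,g]$.

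The main technical point distinguishing this argument from the Kerr one is precisely the handling of the nontrivial Cauchy data for $w_\gamma$: one must verify boundedness of multiplication by $\chi$ on the fractional spaces $\dot H^s$ and $\dot H^{s-1}$ (routine for $s\in(0,1)$) and respect the narrow window $s=\tfrac32-\tfrac{2}{p-1}\in(0,\tfrac12)$ dictated by $p\in(1+\sqrt{2},3)$. Beyond this bookkeeping, the structure is identical to the Kerr argument.
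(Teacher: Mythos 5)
Your proof follows the same template as the paper's: split $\|u\|_X$ into the weighted-Strichartz piece (handled by Proposition~\ref{ap-thm-le-0wS} applied to $\chi Y^\gamma u$, with the now-nontrivial Cauchy data absorbed through the $\dot H^s$, $\dot H^{s-1}$ data terms and the $L^2$-loss hidden in $E[f,g]$) and the energy/localized-energy pieces (handled via the Kerr machinery with $a=0$). The paper's own proof is only a one-sentence pointer to Lemma~\ref{thm-key}, so supplying these details is exactly what's needed, and your treatment of the Cauchy data and of the Strichartz/commutator bookkeeping is sound.

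However, there is a genuine soft spot in your treatment of the energy pieces. You claim the commutator terms $[\Box_S,Y^\gamma]u$ ``land in $l^{1/2}_1(L^2L^2L^2)$ with norm dominated by the LE norm at one-lower order,'' invoking the Kerr-style bootstrap culminating in \eqref{leDn2}. That bootstrap in the Kerr case runs through \eqref{main.estimate.inhom.n} with $n=|\beta|+1$, whose right-hand side requires $\sum_{|\alpha|\le n}\|\partial^\alpha F\|_{L^1L^2}$ --- \emph{including time derivatives of $F$}, which are present in the Kerr $N$-norm via $Z^\gamma$ but absent from the Schwarzschild $N$-norm, which uses only $Y^\gamma$. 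Moreover the commutator $[\Box_S,\nabla_x]u$ is of order two, so controlling $\|\la r\ra^{-2}\partial^2 Y^{\gamma'}u\|_{l^{1/2}_1(L^2)}$ requires $\|\partial Y^{\gamma'}u\|_{LE}$ (same order, not one lower), and $\partial$ includes $\partial_\tv$, which takes you outside the $Y$-hierarchy. To make this work one must either (i) use the equation to replace $\partial_\tv^2 u$ by $F$ plus spatial-plus-one-$\partial$ derivatives of $u$ before estimating the commutator, and then argue the resulting same-order contribution can be absorbed, or (ii) establish a variant of \eqref{main.estimate.inhom.n} in which only $\nabla_x^\gamma F$ appears on the right (exploiting $[\Box_S,\pt]=0$). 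Neither step is automatic from what you've written, and the interaction with the degeneracy of the LE norm at $r\approx 3M$ deserves explicit attention. You should flag this as the point that distinguishes the Schwarzschild lemma from the Kerr one (rather than only the Cauchy-data issue), and supply the missing argument.
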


 The proof is similar (but easier) to that of Lemma \ref{thm-key} , where one uses \eqref{ap-eq-le-0wS} instead of \eqref{weiStri} to bound the commutator term $\chi[\Box_S, Z^{\gamma}]u$. With Lemma \ref{ap-key} instead of Lemma \ref{thm-key}, it is easy to see that the proof of the Strauss conjecture, Theorem \ref{ap-Glob-Schw}, is similar to that of Theorem \ref{Glob-Kerr}.

\subsection{Proof of Proposition \ref{ap-thm-le-0wS}}

As in \cite{SoWa10}, we want to rewrite the equation near infinity as $(\pt^2+P) w=F$ so that $P$ is elliptic and self-adjoint Laplacian with respect to $L^2(\R^3)$.


Recall that, in the $(t,r,\omega)$ coordinates, we have
$$\Box_S=-\weight^{-1}\pt^2+r^{-2}\pa_r r^2 \weight \pa_r+r^{-2}\Delta_\omega=-\weight^{-1}(\pt^2+Q)$$
where $-Q=\weight r^{-2}\pa_r r^2 \weight \pa_r+\weight r^{-2}\Delta_\omega$ is a self-adjoint operator with respect to the metric $\weight^{-1}dx$. We see that $\Box_S u=G$ is equivalent to
$$(\pt^2+Q) u=-\weight G\ .$$

The self-adjoint operator we are seeking is
\[
P=\weight^{-1/2} Q \weight^{1/2}
\]

In conclusion, the above calculation tells us that
$$\Box_S u=G$$ is equivalent to the equation
\beeq\label{ap-NLW2}(\pt^2+P) w=- \weight^{1/2} G\ ,\eneq
with  $w=\weight^{-1/2} u$ and $P$ as above,
\[
 P = -\weight^{1/2} r^{-2}\pa_r r^2 \weight \pa_r  \weight^{1/2}-\weight r^{-2}\Delta_\omega.
\]


Since $u$ is supported in $r \geq K M$ (where we have $\tv=t$), we can essentially reduce the problem to the problems studied in \cite{SoWa10} and \cite{WaYu11}. Since $w$ vanishes for $r<KM$, we can easily extend $P$ to a self-adjoint operator $P_1$ on $\R^3$ (e.g.,
\beeq\label{ap-self-adjoint2}
P_1=-h r^{-2}\pa_r  r^2 h^2 \pa_r  h
-h^2 r^{-2}\Delta_\omega\eneq
where $h(r)=(1-\tilde\psi)\weight^{1/2}+\tilde\psi$ and $\tilde\psi$ is a radial function vanishing for $r>KM$ with $\tilde\psi=1$ for $r<KM/2$).

In particular, we note that $P_1^{1/2}\in S_{hom}^1$ has symbol
\[
 p_1 = h^2 \sqrt{p} + \frac1r e
\]
 Here $\sqrt{p}$ is the symbol of $\sqrt{-\Delta}$ and $e\in S_{hom}^0$, $e\equiv 0$ for $r<KM/2$.

We will need the following lemma, which asserts that $P_1^{1/2}$ behaves like $\nabla_x$ in the appropriate function spaces:
 \begin{lem}\label{P1est}
 The following estimates hold in $\R^3$:
\begin{equation}\label{P1weight}
 \|P_1^{1/2} v\|_{l^{-1/2}_\infty(L^2)} \lesssim  \|\nabla_x v\|_{l^{-1/2}_\infty(L^2)} + \|v\|_{l^{-3/2}_\infty(L^2)}
\end{equation}
\begin{equation}\label{P1H1}
 \|P^{s/2}_1 u\|_{L^2} \simeq \|u\|_{\dot{H}^s}, \qquad s\in[-1,1].
\end{equation}
 \end{lem}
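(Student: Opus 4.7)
The plan is to treat $P_1$ as a uniformly elliptic, positive, self-adjoint second-order operator on $L^2(\mathbb R^3, dx)$, exploiting that $h$ is smooth, bounded above, and bounded below by a positive constant (since the transition region of $\tilde\psi$ lies inside $\{r > 2M\}$, where $\weight^{1/2}$ is nondegenerate). In particular $P_1 = -\Delta$ on $\{r < KM/2\}$, and its principal symbol is globally comparable to $|\xi|^2$.

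For \eqref{P1H1}, I would first prove the endpoint $s=1$ via the quadratic form. Starting from \eqref{ap-self-adjoint2} and integrating by parts against the measure $dx = r^2\,dr\,d\omega$, one obtains
$$\langle P_1 u, u\rangle_{L^2(\mathbb R^3)} = \int h^2 \bigl|\partial_r(hu)\bigr|^2\, r^2\,dr\,d\omega + \int h^2 |\nabla_\omega u|^2\,dr\,d\omega.$$
Expanding $\partial_r(hu) = h'u + h\,\partial_r u$, using that $h$ and $h'$ are bounded with $h \gtrsim 1$, and absorbing the compactly-supported cross term $(h')^2|u|^2$ by local Sobolev embedding into $\|\nabla u\|_{L^2}^2$, yields the two-sided bound $\|P_1^{1/2} u\|_{L^2}^2 = \langle P_1 u, u\rangle \simeq \|\nabla u\|_{L^2}^2 = \|u\|_{\dot{H}^1}^2$. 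The case $s=0$ is trivial; $s = -1$ follows by duality from self-adjointness of $P_1$; and the intermediate range $s \in (-1, 1)$ comes from Stein complex interpolation applied to the analytic family $T_z = P_1^{z/2}(-\Delta)^{-z/2}$, which is $L^2$-bounded on $\operatorname{Re} z \in \{-1,0,1\}$ by the previous three cases.

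For \eqref{P1weight}, I would use the classical symbol decomposition $P_1^{1/2} v = h^2 \sqrt{-\Delta}\, v + R v$, where $R$ is the order-zero pseudodifferential operator with symbol $\frac{1}{r}e$, $e \in S_{hom}^0$ vanishing for $r < KM/2$. For the principal piece, boundedness of $h^2$ together with rapid off-diagonal decay of the kernel of $[\sqrt{-\Delta},\phi_j]$ reduces matters to the standard estimate
$$\|\sqrt{-\Delta}\,v\|_{l^{-1/2}_\infty(L^2)} \lesssim \|\nabla_x v\|_{l^{-1/2}_\infty(L^2)},$$
which follows from writing $\sqrt{-\Delta} = \sum_k R_k\partial_k$ via Riesz transforms and using their boundedness on the Muckenhoupt-weighted $L^2$ spaces controlling the local energy norm (or equivalently, via a direct square-function argument on dyadic annuli). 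For the remainder, the $r^{-1}$ prefactor in the symbol yields $\|Rv\|_{L^2(|x|\sim 2^j)} \lesssim 2^{-j} \|v\|_{L^2(|x|\sim 2^j)}$ up to off-diagonal tails controlled by the $S_{hom}^0$ smoothness of $e$; multiplying by $2^{-j/2}$ and taking the supremum in $j$ gives $\|Rv\|_{l^{-1/2}_\infty(L^2)} \lesssim \|v\|_{l^{-3/2}_\infty(L^2)}$.

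The main technical obstacle is that $\sqrt{-\Delta}$ and $R$ are nonlocal, so their interaction with the dyadic cutoffs $\phi_j$ defining $l^{-1/2}_\infty(L^2)$ is not automatic. The standard remedy is to split the Schwartz kernels into a near-diagonal part (which preserves dyadic localization up to bounded constants and feeds back into the respective right-hand-side terms) and an off-diagonal part whose polynomial decay in the scale separation permits geometric summation, absorbing the error into $\|v\|_{l^{-3/2}_\infty(L^2)}$.
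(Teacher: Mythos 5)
Your overall strategy coincides with the paper's: for \eqref{P1weight}, a symbol decomposition $P_1^{1/2}=h^2\sqrt{-\Delta}+R$ with near-diagonal/off-diagonal kernel splitting, and for \eqref{P1H1}, a quadratic-form computation at $s=1$ followed by duality and interpolation. There is, however, a concrete error in the $s=1$ step. The cross term $(h')^2|u|^2$ is \emph{not} compactly supported. Since $h(r)=(1-\tilde\psi(r))(1-2M/r)^{1/2}+\tilde\psi(r)$ agrees with $(1-2M/r)^{1/2}$ for all $r>KM$, its radial derivative satisfies $h'(r)\sim Mr^{-2}\ne 0$ on the entire unbounded region $\{r>KM\}$. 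Local Sobolev embedding is therefore not available, and of course one cannot simply bound $\|u\|_{L^2}$ by $\|\nabla u\|_{L^2}$ globally on $\R^3$. The correct mechanism, and the one the paper uses, is the pointwise decay $|h'|\le C/r$ (see \eqref{ap-f}) combined with Hardy's inequality $\|r^{-1}u\|_{L^2(\R^3)}\lesssim\|\nabla u\|_{L^2(\R^3)}$; that is what yields the two-sided equivalence $\|\nabla(hu)\|_{L^2}\simeq\|\nabla u\|_{L^2}$ and hence $\langle P_1 u,u\rangle\simeq\|u\|^2_{\dot H^1}$.

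Once you replace ``compactly supported $+$ local Sobolev'' by ``$|h'|\lesssim r^{-1}$ $+$ Hardy,'' the $s=1$ argument is sound. The remainder of your proposal is substantively the same as the paper's: the near-diagonal bound for $R$ using the $r^{-1}$ symbol prefactor, the off-diagonal tails controlled by the $S^0_{hom}$ kernel decay (the paper records the explicit bound $|K_e(x,y)|\lesssim|x-y|^{-n}|x+y|^{-1}$ for $|x\pm y|\ge 1$), and Stein complex interpolation in place of the paper's briefer ``interpolation and duality'' for the range $s\in(-1,1)$, which amount to the same thing.
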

\begin{proof}
  Let $K_e$ denote the kernel of the operator associated to the error $\frac1r e$. For \eqref{P1weight} it is enough to prove that
\[
 \|\int K_e (x,y) v(y)dy \|_{l^{-1/2}_\infty(L^2)} \lesssim \|v\|_{l^{-3/2}_\infty(L^2)}
\]

 Fix a dyadic region $A_k = \{2^{k-1}\leq |x| \leq 2^k\}$. When $|y|\approx 2^k$, we easily get the bound
\begin{equation}\label{xeqy}
 \|\int_{2^{k-2}\leq |y|\leq 2^{k+1}} K_e (x,y) v(y)dy\|_{L^2(A_k)} \lesssim \|r^{-1} v\|_{L^2(2^{k-2}\leq r \leq 2^{k+1})}\lesssim 2^{k/2} \|v\|_{l^{-3/2}_\infty(L^2)}
\end{equation}

 When $|y-x| \gg 1$, we use the bound
 \[
 |K_e (x, y)|\lesssim |x-y|^{-n}|x+y|^{-1}, \qquad |x \pm y|\geq 1
 \]
 which comes from $e\in S_{hom}^0$ and the self-adjointess of the operator.

 By H\"older's inequality we get for $j>k+2$:
\[
 |\int_{2^{j-1} \leq |y|\leq 2^j} K_e (x,y) v(y)dy| \lesssim |\int_{2^{j-1} \leq |y|\leq 2^j} 2^{-4j} v(y)dy| \lesssim 2^{-j} \|v\|_{l^{-3/2}_\infty(L^2)}
\]

Similarly for $j<k-1$ we obtain the bound
\[
 |\int_{2^{j-1} \leq |y|\leq 2^j} K_e (x,y) v(y)dy| \lesssim 2^{3j-4k} \|v\|_{l^{-3/2}_\infty(L^2)}
\]

 Thus after summation we obtain the pointwise estimate
\[
 |\int_{|y-x| \gg 1} K_e (x,y) v(y)dy| \lesssim 2^{-k} \|v\|_{l^{-3/2}_\infty(L^2)}
\]
from which \eqref{P1weight} follows immediately, taking also \eqref{xeqy} into account.

By interpolation and duality, we
need only to prove the estimate \eqref{P1H1} for the special case where $s=1$.

For $h(r)=(1-\tilde\psi)\weight^{1/2}+\tilde\psi$, by choosing $K$ large enough , we have
\beeq\label{ap-f} 1/2\le h\le 2\ , |h'|\le C/r\ , |\pa_r h^{-1}|\le C/r\ .\eneq
By the expression of $P_1$ \eqref{ap-self-adjoint2}, it is easy to see that
$$  \|P_1^{1/2} u\|_{L^2}^2
=
  \int_{\R^3}h^2\left(
|\pa_r  h u|^2+\left|\ang u
\right|^2\right) dx\simeq \|\nabla (h u)\|_{L^2}^2\ .$$
However, the equivalence between $\|\nabla u\|_{L^2}$ and $\|\nabla(h u)\|_{L^2}$ can be seen
 by the Hardy inequality and \eqref{ap-f}.

 \end{proof}
 The proof of Proposition \ref{ap-thm-le-0wS} is reduced to the following proposition.
\begin{prop}\label{ap-thm-le-0wS-1}
Let $w$ solves the equation $(\pt^2+P_1) w=G=G_1+G_2$ with initial data $(w_0, w_1)$ on $\{t=0\}$, we have
\beeq\label{ap-eq-le-0wS-1}
\begin{array}{ll}
  \|r^{3/2-4/q-s} w\|_{L^q_{t,r\ge 1}L^2_\omega} & \le  C (\| w_0\|_{\dot H^s}+\|w_1\|_{\dot H^{s-1}}\\
  &+\|r^{-1/2-s}G_1\|_{L^1_{t,r} L^2_\omega}+\|\<r\>^{3/2-s+\de}G_2\|_{L^2_{t,x}}),
\end{array}
\eneq for any $q\ge 2$, $1/2-1/q<s<1/2$.
\end{prop}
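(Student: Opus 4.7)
The plan is to mirror the interpolation proof of Theorem \ref{thm-wStri}, with $\Box_h$ replaced by $\partial_t^2 + P_1$. The key enabling tool is the equivalence $\|P_1^{s/2}\cdot\|_{L^2}\simeq\|\cdot\|_{\dot H^s}$ for $s\in[-1,1]$ supplied by Lemma \ref{P1est}, which lets us phrase every estimate in terms of ordinary homogeneous Sobolev spaces rather than in spaces adapted to $P_1$. Since $P_1$ is self-adjoint, uniformly elliptic, and coincides with $-\Delta$ outside a compact set, the nontrapping Morawetz/KSS machinery of \cite{MetSo06, MetSo07, MetTa07} applies to produce the two endpoint localized energy estimates that we will interpolate between.

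First I would establish the two endpoints. The Morawetz multiplier of the form $f(r)\partial_r + \tfrac{1}{r}f(r)$ described in Section 2 yields the differentiated endpoint
\begin{equation*}
\|\partial w\|_{L^\infty_t L^2_x \cap l^{-1/2}_\infty(L^2_{t,x})} + \|w\|_{l^{-3/2}_\infty(L^2_{t,x})} \lesssim \|w_0\|_{\dot H^1} + \|w_1\|_{L^2} + \|G\|_{L^1_t L^2_x + l^{1/2}_1(L^2_{t,x})}.
\end{equation*}
The self-adjointness of $\partial_t^2 + P_1$ on $L^2_{t,x}$ then lets the duality argument of Lemma \ref{thm-kss2} go through verbatim, producing the undifferentiated endpoint
\begin{equation*}
\|w\|_{L^\infty_t L^2_x \cap l^{-1/2}_\infty(L^2_{t,x})} \lesssim \|w_0\|_{L^2} + \|w_1\|_{\dot H^{-1}} + \|G\|_{L^1_t \dot H^{-1}_x + l^{1/2}_1(L^2_t \dot H^{-1}_x)},
\end{equation*}
where Lemma \ref{P1est} is what converts $\|P_1^{-1/2}\cdot\|_{L^2}$ into $\|\cdot\|_{\dot H^{-1}}$.

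Real interpolation between these two endpoints, using the $l^s_q$ and $\dot H^s$ interpolation identities bulleted after \eqref{kss_noD}, reproduces the analogue of \eqref{5} with data contributions $\|w_0\|_{\dot H^{s'}} + \|w_1\|_{\dot H^{s'-1}}$ on the right. Composing the $L^\infty_t \dot H^{s'}_x$ part with the trace inequality \eqref{trace} on $\S^2$ reproduces \eqref{6}, and a further real interpolation against the $l^{-1/2-s}_2$ localized-energy component reproduces \eqref{7} for every $q \ge 2$ and $1/2-1/q < s < 1$. Finally, restricting $s$ to $(1/2-1/q, 1/2)$, dualizing \eqref{trace} to rewrite the $G_1$ source norm as $\|r^{-1/2-s} G_1\|_{L^1_{t,r} L^2_\omega}$ (as in \eqref{8}), and applying the Sobolev--H\"older chain that yields \eqref{9} to bound the $G_2$ source norm by $\|\langle r\rangle^{3/2-s+\delta} G_2\|_{L^2_{t,x}}$, delivers \eqref{ap-eq-le-0wS-1}. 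The cutoff $r \ge 1$ on the left lets us freely exchange $\langle r\rangle$ for $r$.

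The main obstacle will be verifying the first endpoint for $P_1$: although $P_1$ equals $-\Delta$ outside the support of $\tilde\psi$ and is uniformly elliptic everywhere, the conjugation factors $h$ in \eqref{ap-self-adjoint2} distort the standard multiplier computation in the transition region $r \sim KM$. One must use the a priori bounds \eqref{ap-f} (which hold once $K$ is chosen large enough) to check that the positive bulk terms produced by the commutator with $f(r)\partial_r + \tfrac{1}{r}f(r)$ dominate the error arising from the $h$-derivatives. Once this is in hand, the remaining steps are essentially algorithmic and identical to the derivation of Theorem \ref{thm-wStri}.
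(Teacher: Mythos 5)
Your interpolation skeleton matches the paper's: establish a differentiated and an undifferentiated localized-energy endpoint for $\partial_t^2 + P_1$, then repeat the real interpolation and trace-lemma steps from the proof of Theorem~\ref{thm-wStri}. However, there is a genuine gap in how you obtain the undifferentiated endpoint, and it is located precisely at the step that Remark~\ref{ap-rem2} flags as the ``key fact.'' You attribute the bound
\[
\|w\|_{L^\infty_t L^2_x \cap l^{-1/2}_\infty(L^2_{t,x})} \lesssim \|w_0\|_{L^2} + \|w_1\|_{\dot H^{-1}} + \|G\|_{L^1_t \dot H^{-1}_x + l^{1/2}_1(L^2_t \dot H^{-1}_x)}
\]
to ``the duality argument of Lemma~\ref{thm-kss2} going through verbatim,'' but that argument only produces the $G$-term: it is a $T^*T$/dual pairing on a time strip and it explicitly assumes \emph{vanishing} initial data. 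Duality alone cannot conjure the data contribution $\|w_0\|_{L^2}+\|w_1\|_{\dot H^{-1}}$, and your parenthetical invocation of Lemma~\ref{P1est} only makes sense once $P_1^{-1/2}$ has actually been applied to something. The paper's argument splits $w = w_{\mathrm{hom}} + (w - w_{\mathrm{hom}})$. The forced part, with zero data, is handled by duality exactly as in Lemma~\ref{thm-kss2}. The homogeneous part is handled by conjugating by $P_1^{-1/2}$: since $P_1$ is \emph{time-independent}, $P_1^{-1/2}$ commutes with $\partial_t^2 + P_1$, so $P_1^{-1/2}w_{\mathrm{hom}}$ again solves the homogeneous equation; one then applies the differentiated endpoint to $P_1^{-1/2}w_{\mathrm{hom}}$ and uses Lemma~\ref{P1est} (the equivalence $\|P_1^{s/2}\cdot\|_{L^2}\simeq\|\cdot\|_{\dot H^s}$, together with \eqref{P1weight} for the dyadic weights) to convert back to ordinary Sobolev norms on both sides. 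This commutation step is the entire reason the paper went to the trouble of manufacturing a self-adjoint, \emph{time-independent} $P_1$; simply noting that $\partial_t^2 + P_1$ is self-adjoint as a spacetime operator does not capture it. Your proposal should make this homogeneous/inhomogeneous split and the $P_1^{-1/2}$-conjugation explicit.

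Two smaller remarks. First, you state that $P_1$ ``coincides with $-\Delta$ outside a compact set''; this is backwards. By construction $h\equiv 1$ only for $r<KM/2$, so $P_1=-\Delta$ \emph{inside} a compact ball, while for $r>KM$ one has $P_1=P=\weight^{1/2}Q\weight^{1/2}$, a perturbation of $-\Delta$ whose coefficients decay like $M/r$. This means the asymptotic region — not just the transition annulus $r\sim KM$ — is where the small-perturbation Morawetz machinery is doing real work. Second, the paper does not actually re-run a multiplier argument for $P_1$: for the homogeneous piece it invokes the Schwarzschild localized energy estimate \eqref{main.estimate.inhom} directly, and for the forced piece it quotes \eqref{ms_kss}/\eqref{kss_noD}. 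Re-deriving the differentiated endpoint by an explicit multiplier computation for $P_1$, as you propose, is a legitimate and arguably more self-contained alternative route, but it is not what the paper does, and it is not the step that actually requires a new idea here.
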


 Let us give the proof of Proposition \ref{ap-thm-le-0wS}, based on Proposition \ref{ap-thm-le-0wS-1}.\\
 {\noindent {\bf Proof of Proposition \ref{ap-thm-le-0wS}.} }
 Since $u$ vanishes in the region $\{r<KM\}$, $\Box_S u=F$ with initial data $(u_0, u_1)$ is equivalent to
$$(\pt^2+P_1) w=G$$
with $w=\weight^{-1/2} u$ with initial data $(w_0,w_1)=(\weight^{-1/2} f, \weight^{-1/2}g)$ and $G=-\weight^{1/2}F$. Noting the support property of $u$, there is a cutoff function $\phi$ with support in $\{r<KM\}$ and $\phi=1$ for $r<(K-1)M$, such that $u=(1-\phi)u$ and so
$$w=\weight^{-1/2} (1-\phi) u\ .$$
Then by the fractional Leibniz rule (see e.g. Lemma 2.7 of \cite{WaYu11}), we have
\beeq\label{ap-Leib}\|w\|_{\dot H^s}\le C\left\|\weight^{-1/2} (1-\phi)\right\|_{L^\infty\cap \dot W^{1,3}} \|u\|_{\dot H^s}\le C \|u\|_{\dot H^s}, s\in [-1,1]\ .\eneq
Thus, by Proposition \ref{ap-thm-le-0wS-1}, we get
 \begin{eqnarray*}
  \|r^{3/2-4/q-s} u\|_{L^q_{\tv,r}L^2_\omega(\M)} & \le &  \|r^{3/2-4/q-s} w\|_{L^q_{t,r\ge 1}L^2_\omega}\\
   & \le & C (\| w_0\|_{\dot H^s}+\|w_1\|_{\dot H^{s-1}}\\
  &&+\|r^{-1/2-s}G_1\|_{L^1_{t,r} L^2_\omega}+\|\<r\>^{3/2-s+\de}G_2\|_{L^2_{t,x}})\\
&\le&   C (\| f\|_{\dot H^s}+\|g\|_{\dot H^{s-1}}\\
&  &+\|r^{-1/2-s}F_1\|_{L^1_{t,r} L^2_\omega}+\|\<r\>^{3/2-s+\de}F_2\|_{L^2_{t,x}}),
    \end{eqnarray*} where we have used the inequality \eqref{ap-Leib} for $w_0$ and $w_1$.  This completes the proof.
 {\hfill  {\vrule height6pt width6pt depth0pt}\medskip}

 Now we turn to the proof of the Proposition \ref{ap-thm-le-0wS-1}. Let $w_{hom}$ be the solution to the homogeneous problem with initial data $(w_0, w_1)$.
Inequality \eqref{main.estimate.inhom} implies in particular that
\begin{equation}\label{SchwH1}
  \|\nabla w_{hom}\|_{L^\infty_tL^2_x\cap l^{-1/2}_\infty(L^2_tL^2_x)} + \|w_{hom}\|_{l^{-3/2}_\infty(L^2_tL^2_x)} \lesssim \| w_0\|_{\dot{H}^1} + \|w_1\|_{L^2}.
\end{equation}

 Since $\partial_t$ and $P_1$ commute with $P_1^{-1/2}$, we obtain
\[
\|\nabla P_1^{-1/2} w_{hom}\|_{L^\infty_t L^2_x\cap l^{-1/2}_\infty(L^2_tL^2_x)} + \| P_1^{-1/2} w_{hom}\|_{l^{-3/2}_\infty(L^2_tL^2_x)} \lesssim \|P_1^{-1/2} w_0\|_{\dot{H}^1} + \|P_1^{-1/2} w_1\|_{L^2}
\]
which after using Lemma \ref{P1est} yields
\begin{equation}\label{SchwL2}
 \|w_{hom}\|_{L^\infty_tL^2_x\cap l^{-1/2}_\infty(L^2_tL^2_x)} \lesssim \|w_0\|_{L^2} + \|w_1\|_{\dot{H}^{-1}}.
\end{equation}

 The inhomogeneous part $w-w_{hom}$ has vanishing initial data, so we can use the estimates \eqref{ms_kss} and \eqref{kss_noD} to bound it. We thus obtain, using \eqref{SchwH1} and \eqref{SchwL2}:
\[
 \|\nabla w\|_{L^\infty_tL^2_x\cap l^{-1/2}_\infty(L^2_tL^2_x)} \lesssim \| w_0\|_{\dot H^1}+\|w_1\|_{L^2} + \|G\|_{L^1_tL^2_x + l^{1/2}_1(L^2_t L^2_x)}
\]
\[
\| w\|_{L^\infty_tL^2_x\cap l^{-1/2}_\infty(L^2_tL^2_x)} \lesssim \| w_0\|_{L^2}+\|w_1\|_{\dot H^{-1}} + \|G\|_{l^{1/2}_1(L^2_t\dot{H}^{-1}_x) +
L^1_t\dot{H}^{-1}_x}
\]
 The proof of Proposition \ref{ap-thm-le-0wS-1} now follows as in Proposition \ref{thm-wStri} through interpolation and the use of the trace lemma.

\end{document}